\DeclareMathOperator{\Ban}{\mathbf{Ban}} %% Banach Spaces
\DeclareMathOperator{\Csn}{\mathbf{Csn}} %% Complete seminormed spaces
\newcommand{\gban}[1]{#1{-}\mathbf{Ban}}
\DeclareMathOperator{\id}{id}
\DeclareMathOperator{\Hom}{Hom}
\DeclareMathOperator{\mono}{\rightarrowtail}
\DeclareMathOperator{\epi}{\twoheadrightarrow}
\DeclareMathOperator{\Ker}{Ker}
\DeclareMathOperator{\Coim}{Coim}
\DeclareMathOperator{\imhelp}{Im} % change imaginary part to Im
\renewcommand{\Im}{\imhelp} 
\DeclareMathOperator{\Coker}{Coker}
\DeclareMathOperator{\Ch}{\mathbf{Ch}}
\DeclareMathOperator{\D}{\mathbf{D}}
\DeclareMathOperator{\real}{real}
\DeclareMathOperator{\prtens}{\widehat{\otimes}}
\DeclareMathOperator{\clspan}{\overline{span}}
\DeclareMathOperator{\rel}{rel}
\DeclareMathOperator{\opp}{op}
\newcommand{\mat}[1]{\left[\begin{smallmatrix} #1 \end{smallmatrix}\right]}
\DeclareMathOperator{\scrA}{\mathscr{A}}
\DeclareMathOperator{\scrB}{\mathscr{B}}
\DeclareMathOperator{\scrC}{\mathscr{C}}
\DeclareMathOperator{\scrE}{\mathscr{E}}
\DeclareMathOperator{\scrH}{\mathscr{H}}
\DeclareMathOperator{\bfL}{\mathbf{L}}
\DeclareMathOperator{\bfR}{\mathbf{R}}
\DeclareMathOperator{\bbZ}{\mathbb{Z}}
\def\@strippedMR{}
\def\@scanforMR#1#2#3\endscan{%
  \ifx#1M\ifx#2R\def\@strippedMR{#3}%
  \else\def\@strippedMR{#1#2#3}%
  \fi\fi}
\renewcommand\MR[1]{\relax\ifhmode\unskip\spacefactor3000 \space\fi
  \@scanforMR#1\endscan
  MR\MRhref{\@strippedMR}{\@strippedMR}}
\newcommand\@dotsep{4.5}
\def\@tocline#1#2#3#4#5#6#7{\relax
  \ifnum #1>\c@tocdepth % then omit
  \else
    \par \addpenalty\@secpenalty\addvspace{#2}%
    \begingroup \hyphenpenalty\@M
    \@ifempty{#4}{%
      \@tempdima\csname r@tocindent\number#1\endcsname\relax
    }{%
      \@tempdima#4\relax
    }%
    \parindent\z@ \leftskip#3\relax \advance\leftskip\@tempdima\relax
    \rightskip\@pnumwidth plus1em \parfillskip-\@pnumwidth
    #5\leavevmode\hskip-\@tempdima #6\relax
    \leaders\hbox{$\m@th
      \mkern \@dotsep mu\hbox{.}\mkern \@dotsep mu$}\hfill
    \hbox to\@pnumwidth{\@tocpagenum{#7}}\par
    \nobreak
    \endgroup
  \fi}
\newtheoremstyle{mythm}% hnamei
 {6pt}% hSpace abovei
 {6pt}% hSpace belowi 
 {\itshape}% hBody fonti 
 {}% hIndent amounti1
 {\scshape}% hTheorem head fonti 
 {.}% hPunctuation after theorem headi
 {.5em}% hSpace after theorem headi2
 {}%
\newtheoremstyle{mydef}% hname
 {6pt}% hSpace abovei
 {6pt}% hSpace belowi 
 {}% hBody fonti 
 {}% hIndent amounti1
 {\scshape}% hTheorem head fonti 
 {.}% hPunctuation after theorem headi
 {.5em}% hSpace after theorem headi2
 {}%
\renewenvironment{proof}[1][\proofname]{\par
  \pushQED{\qed}%
  \normalfont \topsep6\p@\@plus6\p@\relax
  \trivlist
  \item[\hskip\labelsep
        \scshape
    #1\@addpunct{.}]\ignorespaces
}{%
  \popQED\endtrivlist\@endpefalse
}
\theoremstyle{mythm}
\newtheorem{Thm}{Theorem}[section]
\newtheorem{Cor}[Thm]{Corollary}
\newtheorem{Prop}[Thm]{Proposition}
\newtheorem*{Thmnn}{Theorem}
\newtheorem*{Propnn}{Proposition}
\theoremstyle{mydef}
\newtheorem{Exm}[Thm]{Example}
\newtheorem{Not}[Thm]{Notation}
\newtheorem{Rem}[Thm]{Remark}
\newtheorem{Def}[Thm]{Definition}
\newtheorem*{Remnn}{Remark}
\title[On the Duality between $\ell^{1}$-homology and Bounded %
Cohomology]%
{On the Duality between $\ell^{1}$-Homology\\and Bounded Cohomology}
\author{Theo B\"uhler}
\address{Department of Mathematics, ETH Z\"{u}rich, Switzerland}
\email{theo@math.ethz.ch}
\subjclass[2000]{18E30 (18E10, 18G60, 46M18, 20J05, 57T)}
\thanks{}%f\"{u}r d stef}
\date{\today}
\begin{document}

%\dominitoc
%\dominilof
%\dominilot

%\frontmatter
\begin{abstract}
  We modify the definition of $\ell^{1}$-homology and argue why our
  definition is more adequate than the classical one. While we cannot
  reconstruct the classical $\ell^{1}$-homology from the new
  definition for various reasons, we can reconstruct its
  Hausdorff\mbox{}ification so that no information concerning
  semi-norms is lost. We obtain an axiomatic characterization
  of our $\ell^{1}$-homology as a universal $\delta$-functor and
  prove that it is pre-dual to our definition of bounded
  cohomology. We thus answer a question raised by L\"oh in her
  thesis. Moreover, we prove Gromov's theorem and the
  Matsumoto-Morita conjecture in our context.
\end{abstract}

\if{0}
  The purpose of this short note is to provide an answer to a question
  raised in L\"oh's thesis~\cite[p.viii]{toolongthesis} concerning the
  duality between $\ell^{1}$-homology and bounded cohomology of
  groups or spaces. As L\"oh
  points out, such a duality cannot hold if the cohomology of a
  complex of Banach spaces is taken in the na\"ive sense, that is to
  say, if it is considered as a (complete) seminormed
  space. However, the theory of abstract truncation provides such a
  duality in a completely formal way: the duality functor on $\Ban$
  yields an exact functor from the heart of the canonical
  right $t$-structure to the heart of the canonical left $t$-structure
  on $\D{(\Ban)}$ and the image of $\ell^{1}$-homology under this
  functor is bounded cohomology.
\fi

\maketitle

\tableofcontents

\section{Introduction}

Gromov introduced $\ell^{1}$-homology and bounded cohomology for
topological spaces in the late
seventies~\cite{MR686042}. The initial purpose of these exotic (co-)homology
theories was to provide topological invariants which control the
minimal volume of a smooth manifold which, by definition, is an
invariant of the differentiable structure. One of Gromov's deeper
theorems asserts that the bounded cohomology of a countable and
connected CW-complex is an invariant of its fundamental group. In
order to make this statement precise, he needed to introduce
$\ell^{1}$-homology and bounded cohomology for discrete groups, which
apparently was developed in unpublished work of Trauber.

Matsumoto-Morita raised the question whether the analog of Gromov's
theorem holds true for $\ell^{1}$-homology
\cite[Remark~2.6]{MR787909}. After some flawed attempts to prove this true,
see \cite{MR2068142} and \cite{MR2142264}, the question was finally answered
affirmatively by L\"oh~\cite{toolongthesis} and the present
author \cite{mythesis} independently.

The variants of $\ell^{1}$-homology and bounded cohomology for groups
were studied by \cite{MR787909} and bounded cohomology was given a
``functorial approach'' by Brooks~\cite{MR624804}, 
Ivanov~\cite{MR806562,MR964260} and
Noskov~\cite{MR1086449,MR1175809}, see
\cite{MR1320279,MR1445197} and \cite{toolongthesis} for further
references. The theory was substantially improved and 
generalized to topological groups by Burger and Monod, 
see~\cite{MR1694584,MR1840942,MR1911660}. While the Burger-Monod
theory proved to be extremely fruitful in the context of rigidity theory,
the algebraic underpinning remained rather undeveloped. In particular, 
it was unknown whether bounded cohomology could be
interpreted as a derived functor. The main purpose of~\cite{mythesis}
is to close this gap and to give an interpretation
of $\ell^{1}$-homology and bounded cohomology in the context of 
modern homological algebra in order to benefit from the power of
its proper language, i.e., category theory.

%After this short introduction to the history of the subject at hand,
Let us turn to mathematics proper. Let $\Ban$ be the additive category
of Banach spaces and continuous linear maps. It is well-known that
$\Ban$ is quasi-abelian and that there are enough projectives and
enough injectives. If $G$ is a group, we denote the category of
isometric representations of $G$ on Banach spaces and $G$-equivariant
continuous linear maps by $\gban{G}$. It is easy to prove that
$\gban{G}$ is quasi-abelian and has enough projectives and enough
injectives, hence the formalism of derived categories allows us to
derive functors defined on $\gban{G}$. In order to speak of homology,
the theory of $t$-structures and their hearts is virtually forced upon
us.

For every quasi-abelian category there are \emph{two} canonical
$t$-structures, which we call the \emph{left} and \emph{right}
$t$-structures, see Definition~\ref{def:can-t-structures}.
The left $t$-structure on $\D{(\scrA^{\opp})} \cong 
\left(\D{(\scrA)}\right)^{\opp}$ is dual to the right $t$-structure on
$\D{(\scrA)}$ in the sense of \cite[1.3.2~(iii)]{MR751966}. In
particular the heart $\scrC_{\ell}{(\scrA^{\opp})}$ 
of the left $t$-structure on $\D{(\scrA^{\opp})}$
is equivalent to the opposite category of the heart
$\scrC_{r}{(\scrA)}$ of the right $t$-structure on
$\D{(\scrA)}$. We write
$H_{\ell}^{n}: \D{(\scrA)} \to \scrC_{\ell}{(\scrA)}$ and 
$H_{r}^{n}: \D{(\scrA)} \to \scrC_{r}{(\scrA)}$ for the associated
homological functors.

There is the following explicit description of the left heart
$\scrC_{\ell}{(\scrA)}$ on $\D{(\scrA)}$: objects are represented by 
a monic $(A^{-1} \to A^{0})$ in $\scrA$ while the
morphisms are obtained from the morphisms of pairs by dividing out the
homotopy equivalence relation and inverting quasi-isomorphisms
(bicartesian squares) formally. By the aforementioned duality, the
right heart $\scrC_{r}{(\scrA)}$ has a dual description.

There are exact inclusion functors
\[
\iota_{\ell}: \scrA \to \scrC_{\ell}{(\scrA)} \qquad \text{and} \qquad 
\iota_{r}: \scrA \to \scrC_{r}{(\scrA)}
\]
given on objects by $\iota_{\ell}{(A)} = (0 \to A)$ and 
$\iota_{r}(A) = (A \to 0)$. The functor $\iota_{\ell}$ has a left
adjoint $q_{\ell}$ given on objects by $q_{\ell}(d:A^{-1} \to A^{0}) =
\Coker_{\scrA}{(A)}$. Similarly, $\iota_{r}$ has a right adjoint
$q_{r}$ induced by the kernel functor in $\scrA$.

Let us specialize to the category $\gban{G}$. The trivial module
functor (augmentation) ${}_{\varepsilon}({-}): \Ban \to \gban{G}$ has a
left adjoint given by the co-invariants $({-})_{G}$ and a right 
adjoint given by the invariants $({-})^{G}$. 
Underlying our definition of $\ell^{1}$-homology and
bounded cohomology are the derived functors
\[
\bfL^{-}{({-})_{G}}: \D^{-}{(\gban{G})} \to \D^{-}{(\Ban)}
\]
and
\[
\bfR^{+}{({-})}^{G}: \D^{+}{(\gban{G})} \to \D^{+}{(\Ban)}.
\]
By considering $\gban{G}$ as the full subcategory of complexes
concentrated in degree zero we define for each $M \in \gban{G}$ the
\emph{$\ell^{1}$-homology} of $G$ with coefficients in $M$ as 
\[
\scrH^{\ell^{1}}_{n}{(G,M)} := H^{-n}_{r}{(\bfL^{-}{({-})_{G}}(M))}
\in \scrC_{r}{(\Ban)}
\]
and the \emph{bounded cohomology} of $G$ with coefficients in $M$ as
\[
\scrH_{b}^{n}{(G,M)} := H^{n}_{\ell}{(\bfR^{+}{({-})^{G}}(M))} \in
\scrC_{\ell}{(\Ban)}.
\]
%We have:
\begin{Thmnn}
  \leavevmode
  \begin{enumerate}[(i)]
    \item
      The $\ell^{1}$-homology functors assemble to a universal 
      homological $\delta$-functor
      \[
      \scrH^{\ell^{1}}_{\ast}{(G, {-})} : \gban{G} \to
      \scrC_{r}{(\Ban)},
      \]
      moreover, $\scrH_{0}^{\ell^{1}}{(G,M)} = (M_G \to 0)$.
    \item
      The bounded cohomology functors assemble to a universal 
      cohomological $\delta$-functor
      \[
      \scrH^{\ast}_{b}{(G, {-})} : \gban{G} \to
      \scrC_{\ell}{(\Ban)},
      \]
      moreover, $\scrH^{0}_{b}{(G,M)} \cong (0 \to M^{G})$.
  \end{enumerate}
\end{Thmnn}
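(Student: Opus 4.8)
The plan is to prove~(i) in full and derive~(ii) by the formally dual argument: passing from $\gban{G}$ to its opposite category interchanges enough projectives with enough injectives, $\bfL^{-}$ with $\bfR^{+}$, the right $t$-structure with the left one, and the left adjoint $({-})_{G}$ with the right adjoint $({-})^{G}$ of the (self-dual) exact functor ${}_{\varepsilon}({-})$. I would in fact isolate once and for all the general statement that, for an additive right-exact functor $F$ between quasi-abelian categories with enough projectives, the functors $H^{-n}_{r} \circ \bfL^{-}F$ form a universal homological $\delta$-functor with $H^{0}_{r} \circ \bfL^{-}F \cong \iota_{r} \circ F$, and then apply it to $F = ({-})_{G}$ for~(i) and to the opposite of $({-})^{G}$ for~(ii). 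For~(i) there are thus three things to check: the $\delta$-functor structure, the value in degree $0$, and universality.

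The $\delta$-functor structure is purely formal. A short exact sequence $0 \to M' \to M \to M'' \to 0$ in $\gban{G}$ is automatically strict, since kernels are strict monomorphisms and cokernels strict epimorphisms in a quasi-abelian category, and hence gives a distinguished triangle $M' \to M \to M'' \to M'[1]$ in $\D^{-}(\gban{G})$. Applying the triangulated functor $\bfL^{-}({-})_{G}$ yields a distinguished triangle in $\D^{-}(\Ban)$, to which the homological functor $H^{\ast}_{r}$ attached to the right $t$-structure assigns a long exact sequence in the abelian category $\scrC_{r}(\Ban)$; under the reindexing $\scrH^{\ell^{1}}_{n} = H^{-n}_{r} \circ \bfL^{-}({-})_{G}$ this is exactly the long exact $\delta$-sequence, and the connecting morphisms are natural because the two functors in play are. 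Since $\bfL^{-}({-})_{G}(M)$ is concentrated in non-positive degrees, $\scrH^{\ell^{1}}_{n}(G,{-}) = 0$ for $n < 0$, so we obtain a homological $\delta$-functor in the usual (non-negative) sense.

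The computation in degree $0$ is the one place where the quasi-abelian subtleties really enter, and it is the main thing to get right. Because $({-})_{G}$ is a left adjoint it preserves cokernels, hence is right exact; in particular $\bfL^{-}({-})_{G}$ is computed by projective resolutions, and for a projective resolution $P_{\bullet} \to M$ one has $\Coker\bigl((P_{1})_{G} \to (P_{0})_{G}\bigr) = (\Coker(P_{1} \to P_{0}))_{G} = M_{G}$, with $(P_{0})_{G} \twoheadrightarrow M_{G}$ a strict epimorphism whose kernel is the closure of the image of the incoming differential $(P_{1})_{G} \to (P_{0})_{G}$. The claim is then that the augmentation $(P_{\bullet})_{G} \to M_{G}[0]$ realizes the canonical truncation $(P_{\bullet})_{G} \to \tau^{\geq 0}_{r}(P_{\bullet})_{G}$, equivalently that $\tau^{\geq 0}_{r}\bfL^{-}({-})_{G}(M) = M_{G}[0]$; this comes down to a direct computation of cohomology objects in the right heart showing that the cone of the augmentation has vanishing cohomology in degrees $\geq -1$ for the right $t$-structure, and it is precisely here that the strict epimorphism above and the identification of its kernel are used (note that the analogous assertion for the \emph{left} $t$-structure would fail). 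This gives $\scrH^{\ell^{1}}_{0}(G,M) = \iota_{r}(M_{G}) = (M_{G} \to 0)$.

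Universality is then immediate from Grothendieck's criterion. For $P$ projective, $\bfL^{-}({-})_{G}(P) \cong (P)_{G}[0]$, so $\scrH^{\ell^{1}}_{n}(G,P) = 0$ for every $n \geq 1$; since $\gban{G}$ has enough projectives, each $M$ admits a strict epimorphism $P \twoheadrightarrow M$ with $P$ projective, whence $\scrH^{\ell^{1}}_{n}(G,P) \to \scrH^{\ell^{1}}_{n}(G,M)$ vanishes for $n \geq 1$, i.e.\ $\scrH^{\ell^{1}}_{n}(G,{-})$ is coeffaceable in positive degrees. By the homological form of Grothendieck's effaceability criterion, a homological $\delta$-functor that is coeffaceable in positive degrees is universal, which completes~(i); part~(ii) follows by the dual argument, with injectives in place of projectives and the left $t$-structure in place of the right one.
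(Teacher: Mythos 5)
Your argument is correct, and it proves the statement by a genuinely more abstract route than the one the paper takes. The paper reduces the theorem to the axiomatic characterization (Theorem~\ref{thm:char-l1-homology}), whose uniqueness/universality part is only cited from the author's thesis, and supplies the existence half concretely in Section~\ref{sec:canonical-resolutions}: it applies $({-})_{G}$ to the canonical bar resolution $\bot_{\ast}M \to M$ of the induction comonad $\bot = {\uparrow}{\downarrow}$, uses exactness of $\bot$ and the identification $(\bot_{n}M)_{G} \cong {\downarrow}\bot_{n-1}M$ to get a short exact sequence of complexes in $\Ch^{\leq 0}{(\Ban)}$, derives the long exact sequence from the snake lemma in $\scrC_{r}{(\Ban)}$, the normalization from the fact that $\iota_{r}$ and $({-})_{G}$ are left adjoints and hence preserve cokernels, and the vanishing on projectives from the split-exactness of $\bot_{\ast}{\uparrow}E \to {\uparrow}E$. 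You instead work with arbitrary projective resolutions and the truncation functors of the right $t$-structure, and you obtain universality directly from coeffaceability rather than from the uniqueness clause of the characterization; your degree-zero computation via $\tau_{r}^{\geq 0}$, including the observation that the kernel of $(P_{0})_{G} \twoheadrightarrow M_{G}$ is the \emph{closed} image of the differential and that the analogous identification fails for the left $t$-structure, is exactly the point where the choice of the right heart is forced, and you make this explicit where the paper leaves it implicit. What the paper's construction buys in exchange is an explicit computing complex (the bar complex), which is then reused in Section~\ref{sec:remarks} to compare with classical $\ell^{1}$-homology; what your argument buys is self-containedness on the universality claim and a cleanly isolated general lemma about right-exact functors between quasi-abelian categories with enough projectives, from which part~(ii) follows by literal dualization just as in the paper.
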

\begin{proof}
  A more precise statement for $\ell^{1}$-homology is given in 
  Theorem~\ref{thm:char-l1-homology} and the (entirely dual) statement
  for bounded cohomology is given in~\cite[p.xiv]{mythesis}.
\end{proof}

\begin{Remnn}
  While it may be perfectly plausible that for duality reasons one
  should choose to use both the left heart and the right heart for defining
  $\ell^{1}$-homology and bounded cohomology, it is natural to wonder
  whether one could interchange ``left'' and ``right'' in the
  definition. In brief, the answer is ``yes, one could, but only at the
  cost of a reasonable duality theory''.
  We will discuss this matter in Section~\ref{sec:remarks}.
\end{Remnn}

The duality functor on $\Ban$ which is exact by Hahn-Banach,
yields an exact duality functor
on $\gban{G}$ and a duality functor
\[
({-})^{\ast}: \scrC_{r}{(\gban{G})} \to \scrC_{\ell}{(\gban{G})}
\]
which is explicitly given on objects by
$(f: A \to B)^{\ast} = (f^{\ast}:B^{\ast} \to A^{\ast})$. We will
prove the following result as Proposition~\ref{prop:duality}.

\begin{Propnn}
  The duality functor $({-})^{\ast}: \scrC_{r}{(\gban{G})}
  \to \scrC_{\ell}{(\gban{G})}$ is well-defined, 
  exact and there is a natural isomorphism of
  functors on $\D{(\gban{G})}$
  \[
  H^{n}_{\ell}{(({-})^{\ast})} \cong \left(H_{r}^{-n}{({-})}\right)^{\ast}.
  \]
\end{Propnn}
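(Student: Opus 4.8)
The plan is to realize the duality functor on the hearts as the restriction of a $t$-exact functor between derived categories, and then to read the displayed isomorphism off from $t$-exactness. First I would record that the continuous-dual functor $({-})^{\ast}$ on $\Ban$, and hence the one it induces on $\gban{G}$, is an \emph{exact} contravariant functor of quasi-abelian categories: by the Hahn--Banach theorem it sends a strict short exact sequence $0 \to A \to B \to C \to 0$ to a strict short exact sequence $0 \to C^{\ast} \to B^{\ast} \to A^{\ast} \to 0$. (This is precisely the point at which one must leave the setting of a general quasi-abelian category, where $({-})^{\ast}$ is merely left exact, and it is the same phenomenon that makes the naive duality of seminormed cohomology fail, cf. the introduction.) Equivalently, regarded as a covariant functor $\Phi\colon \gban{G}^{\opp} \to \gban{G}$, the functor $\Phi$ is exact. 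An exact functor of quasi-abelian categories carries strictly exact complexes to strictly exact complexes, hence induces a triangulated functor $\D{(\gban{G}^{\opp})} \to \D{(\gban{G})}$; unwinding $\D{(\gban{G}^{\opp})} \cong \left(\D{(\gban{G})}\right)^{\opp}$ one checks that this functor sends a complex $C$ to the complex $C^{\ast}$ with $(C^{\ast})^{n} = (C^{-n})^{\ast}$ and transposed differentials. An exact functor of quasi-abelian categories also induces an exact functor between the left hearts, and precomposing $\scrC_{\ell}{(\gban{G}^{\opp})} \to \scrC_{\ell}{(\gban{G})}$ with the equivalence $\scrC_{r}{(\gban{G})}^{\opp} \cong \scrC_{\ell}{(\gban{G}^{\opp})}$ of \cite[1.3.2~(iii)]{MR751966} recovers on objects the assignment $(f\colon A \to B) \mapsto (f^{\ast}\colon B^{\ast} \to A^{\ast})$. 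Since the composite is exact, this already yields the first two assertions.

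For the natural isomorphism I would combine two facts. (a) Under the identification $\D{(\scrA)} \cong \D{(\scrC_{\ell}{(\scrA)})}$ the left $t$-structure becomes the standard one, and an exact functor of abelian categories induces a $t$-exact functor of derived categories commuting with the cohomology functors; applied to $\Phi$ this shows that the induced derived functor is $t$-exact for the left $t$-structures and satisfies $H^{n}_{\ell}{(\Phi({-}))} \cong \Phi(H^{n}_{\ell}{({-})})$, where $\Phi$ on the right denotes the induced functor on left hearts. (b) Again by \cite[1.3.2~(iii)]{MR751966}, under $\D{(\gban{G}^{\opp})} \cong \left(\D{(\gban{G})}\right)^{\opp}$ one has $H^{n}_{\ell}{(C^{\opp})} \cong \bigl(H^{-n}_{r}{(C)}\bigr)^{\opp}$ in $\scrC_{\ell}{(\gban{G}^{\opp})} = \scrC_{r}{(\gban{G})}^{\opp}$. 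Feeding $C \in \D{(\gban{G})}$, viewed as $C^{\opp} \in \D{(\gban{G}^{\opp})}$, into (a), and using that the induced derived functor sends $C^{\opp}$ to $C^{\ast}$, the left-hand side becomes $H^{n}_{\ell}{(C^{\ast})}$; the right-hand side is the image under $\Phi$ of $H^{n}_{\ell}{(C^{\opp})}$, which by (b) is $\bigl(H^{-n}_{r}{(C)}\bigr)^{\opp}$, and whose image under $\Phi$ is, by the identification of the first paragraph, $\bigl(H^{-n}_{r}{(C)}\bigr)^{\ast}$. Naturality in $C$ is built into (a), so $H^{n}_{\ell}{(({-})^{\ast})} \cong \bigl(H^{-n}_{r}{({-})}\bigr)^{\ast}$ as functors on $\D{(\gban{G})}$, as claimed.

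The one genuinely non-formal ingredient is the exactness of $({-})^{\ast}$, which rests on Hahn--Banach and is exactly what separates $\Ban$ and $\gban{G}$ from an arbitrary quasi-abelian category; everything else is bookkeeping, the bulk of it being the verification that the various identifications in play — Schneiders' $\D{(\scrA)} \cong \D{(\scrC_{\ell}{(\scrA)})}$ matching the left $t$-structure to the standard one, the duality $\D{(\scrA^{\opp})} \cong \left(\D{(\scrA)}\right)^{\opp}$ together with $\scrC_{\ell}{(\scrA^{\opp})} \cong \scrC_{r}{(\scrA)}^{\opp}$, and the functoriality of the left heart in exact functors — are mutually compatible and compatible with the concrete description of $({-})^{\ast}$ on complexes. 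As an alternative to fact (a), one can argue by hand: using the truncation functors of the two $t$-structures, any object of $\D{(\gban{G})}$ is represented around a prescribed degree by a strict complex, and on such a complex $H^{n}_{\ell}$ is assembled from cokernels, images and strict monomorphisms precisely dually to the way $H^{-n}_{r}$ is assembled from kernels, coimages and strict epimorphisms; since $({-})^{\ast}$ interchanges these constructions by Hahn--Banach, it interchanges the two computations, which yields the isomorphism together with its naturality directly.
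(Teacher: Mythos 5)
Your overall strategy coincides with the paper's: both arguments rest on (1) the Hahn--Banach exactness of $({-})^{\ast}$, (2) producing the functor on hearts by factoring the exact, monic-preserving composite $\iota_{\ell}\circ({-})^{\ast}:\gban{G}^{\opp}\to\scrC_{\ell}{(\gban{G})}$ through $\scrC_{\ell}{(\gban{G}^{\opp})}\cong\scrC_{r}{(\gban{G})}^{\opp}$ via the universal property of the left heart (Proposition~\ref{prop:inclusion-functor}), and (3) reducing the displayed isomorphism to the compatibility of duality with the ingredients of the truncation functors. For step (3) the paper is more economical than your main argument: it simply records the natural isomorphisms $(\Coker{f})^{\ast}\cong\Ker{(f^{\ast})}$ and $(\Im{f})^{\ast}\cong\Coim{(f^{\ast})}$, which turn the representative $\Coker{d^{-n-1}}\epi\Im{d^{-n}}$ of $H^{-n}_{r}$ directly into the representative $\Coim{((d^{-n})^{\ast})}\mono\Ker{((d^{-n-1})^{\ast})}$ of $H^{n}_{\ell}$ of the dual complex. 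Your preferred route through $t$-exactness and the equivalence $\D{(\scrA)}\cong\D{(\scrC_{\ell}{(\scrA)})}$ is viable but defers the substance to the ``mutual compatibility'' of the various identifications, which you acknowledge rather than verify; the by-hand alternative in your last paragraph is where the proof actually lives, and it is essentially the paper's.

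Two points to repair. First, in that last paragraph you have swapped the two homology functors: by Definition~\ref{def:truncation-functors}, $H^{n}_{\ell}$ is assembled from \emph{kernels and coimages} (a strict monic $\Coim{d^{n-1}}\mono\Ker{d^{n}}$), while $H^{-n}_{r}$ is assembled from \emph{cokernels and images} (a strict epic). This is not merely cosmetic: duality converts cokernels to kernels and images to coimages but \emph{not} conversely --- by Remark~\ref{rem:duality}, $(\Ker{f})^{\ast}$ and $(\Coim{f})^{\ast}$ involve weak${}^{\ast}$-closures and need not equal $\Coker{(f^{\ast})}$ and $\Im{(f^{\ast})}$ --- so the computation succeeds precisely because one dualizes the $\Coker$/$\Im$ data of $H^{-n}_{r}$ and not the $\Ker$/$\Coim$ data of $H^{n}_{\ell}$; as stated, your sentence invokes the direction that fails. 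Second, well-definedness of $({-})^{\ast}$ on $\scrC_{r}{(\gban{G})}$, whose morphisms are homotopy classes localized at bicartesian squares, requires that duality preserve homotopies and bicartesian squares and send epics (dense range) to monics (injective); your construction via the universal property subsumes this, but the monic-preservation of $\gban{G}^{\opp}\to\gban{G}$ should be made explicit, since it is a hypothesis of the universal property and is exactly the asymmetry that forbids a duality functor in the opposite direction.
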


One principal motivation for our definition is that one \emph{cannot}
interchange ``left'' and ``right'' in the previous proposition, see 
Remark~\ref{rem:duality}. Theorem~\ref{thm:duality} is:

\begin{Thmnn}
  The duality functor
  $({-})^{\ast}: \scrC_{r}{(\Ban)} \to \scrC_{\ell}{(\Ban)}$ yields a
  natural isomorphism
  \[
  \left( \scrH_{n}^{\ell^{1}}{(G, M)} \right)^{\ast}
  \cong \scrH_{b}^{n}{(G, M^{\ast})}.
  \]
\end{Thmnn}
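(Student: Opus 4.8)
The plan is to deduce the duality isomorphism from the two structural inputs already assembled in the excerpt: the computation of $\ell^{1}$-homology and bounded cohomology as cohomology (in the appropriate heart) of the derived functors $\bfL^{-}({-})_{G}$ and $\bfR^{+}({-})^{G}$, and the interchange formula $H^{n}_{\ell}(({-})^{\ast}) \cong (H_{r}^{-n}({-}))^{\ast}$ of the preceding Proposition. First I would recall that the duality functor $({-})^{\ast}\colon \Ban \to \Ban$ is exact (Hahn--Banach), hence so is the induced duality $\gban{G} \to \gban{G}$, and that it is contravariant and carries projectives to injectives (the dual of a projective Banach $G$-module is injective, since $({-})^{\ast}$ turns the defining lifting property into the defining extension property and is exact). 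Consequently $({-})^{\ast}$ induces an equivalence $\D^{-}(\gban{G})^{\opp} \xrightarrow{\ \sim\ } \D^{+}(\gban{G})$ and likewise on $\Ban$. The key point is then the compatibility of this equivalence with the two adjoint pairs: the co-invariants $({-})_{G}$ is left adjoint to the trivial-module functor ${}_{\varepsilon}({-})$, while the invariants $({-})^{G}$ is its right adjoint, and these two constructions are swapped by dualization, i.e. there is a natural isomorphism $(M_{G})^{\ast} \cong (M^{\ast})^{G}$ for $M \in \gban{G}$ (both represent the functor $N \mapsto \Hom_{\gban{G}}(M, {}_{\varepsilon}(N^{\ast}))$ on $\Ban$, using reflexivity-free Hahn--Banach duality in the form $\Hom_{\Ban}(V, W^{\ast}) \cong \Hom_{\Ban}(W, V^{\ast})$).

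Having this, I would derive the statement. Since $({-})^{\ast}$ is exact it descends to the derived category, and since it swaps projective and injective resolutions we obtain, for $M \in \gban{G}$ concentrated in degree zero, a natural isomorphism in $\D^{+}(\Ban)$
\[
\bigl(\bfL^{-}({-})_{G}(M)\bigr)^{\ast} \;\cong\; \bfR^{+}({-})^{G}(M^{\ast}),
\]
obtained by taking a projective resolution $P_{\bullet} \to M$ in $\gban{G}$, noting $P_{\bullet}^{\ast}$ is an injective resolution of $M^{\ast}$, and applying the pointwise isomorphism $((P_{\bullet})_{G})^{\ast} \cong (P_{\bullet}^{\ast})^{G}$ from the previous paragraph (with the cochain indexing matching up so that homological degree $n$ goes to cohomological degree $n$). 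Now apply $H^{n}_{\ell}$ to both sides. The left side is $H^{n}_{\ell}\bigl((\bfL^{-}({-})_{G}(M))^{\ast}\bigr)$, which by the interchange isomorphism of the Proposition equals $\bigl(H^{-n}_{r}(\bfL^{-}({-})_{G}(M))\bigr)^{\ast} = \bigl(\scrH_{n}^{\ell^{1}}(G,M)\bigr)^{\ast}$, using the definition $\scrH^{\ell^{1}}_{n}(G,M) = H^{-n}_{r}(\bfL^{-}({-})_{G}(M))$. The right side is $H^{n}_{\ell}\bigl(\bfR^{+}({-})^{G}(M^{\ast})\bigr) = \scrH_{b}^{n}(G, M^{\ast})$ by the definition of bounded cohomology. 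Chaining these gives the asserted natural isomorphism, and naturality in $M$ follows from naturality of each ingredient.

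The main obstacle I expect is not any single deep step but the bookkeeping of \emph{variance and indexing}: $({-})^{\ast}$ is contravariant, so a projective resolution in non-positive cohomological degrees dualizes to an injective resolution in non-negative degrees, and one must check that the degree bookkeeping in $\bfL^{-}$ versus $\bfR^{+}$, together with the sign in $H^{n}_{\ell}(({-})^{\ast}) \cong (H^{-n}_{r}({-}))^{\ast}$, conspires to give exactly $H_n^{\ell^1} \leftrightarrow H_b^n$ and not some shift. A secondary point requiring care is that $\D^{-}(\gban{G})$ and $\D^{+}(\gban{G})$ are the natural homes of the two derived functors, so one should confirm that dualization really exchanges these bounded-above and bounded-below subcategories and that all the isomorphisms above live in the correct one; this is where one invokes ``enough projectives and enough injectives'' in $\gban{G}$, already cited in the excerpt. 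Once the variance and indexing are pinned down, the proof is a formal concatenation of the Proposition, the definitions, and the adjunction-swap $(M_G)^{\ast} \cong (M^{\ast})^{G}$.
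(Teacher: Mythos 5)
Your proposal is correct and follows essentially the same route as the paper: choose a projective resolution $P_{\bullet} \epi M$, use that $(P_{\bullet})^{\ast}$ is an injective resolution of $M^{\ast}$, apply the natural isomorphism $({-})^{\ast} \circ ({-})_{G} \cong ({-})^{G} \circ ({-})^{\ast}$, and conclude via the interchange formula $H^{n}_{\ell}(({-})^{\ast}) \cong (H^{-n}_{r}({-}))^{\ast}$ of Proposition~\ref{prop:duality}. The paper's proof is exactly this concatenation; your extra remarks on variance, indexing, and the representability argument for $(M_{G})^{\ast} \cong (M^{\ast})^{G}$ only make explicit what the paper delegates to the cited propositions.
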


To end this introductory section, we pass from groups to spaces.
Following Gromov we associate to a
topological space $X$ its $\ell^{1}$-singular chain complex
$C^{\ell^{1}}_{\ast}{(X)}$ and its bounded singular cochain complex
$C^{\ast}_{b}{(X)}$, see~\cite[p.xxi]{mythesis} for the precise 
definition. We define $\ell^{1}$-homology of $X$ as
\[
\scrH_{n}^{\ell^{1}}{(X)} := H_{r}^{-n}{(C^{\ell^{1}}_{\ast}{(X)})}
\in \scrC_{r}{(\Ban)}
\]
and bounded cohomology as 
\[
\scrH^{n}_{b}{(X)} := H_{\ell}^{n}{(C_{b}^{\ast}{(X)})} \in \scrC_{\ell}{(\Ban)}.
\]
If $X$ is a countable and connected CW-complex, let
$G = \pi_{1}{(X)}$ be its fundamental group. We proved
that $C^{\ell^{1}}_{\ast}{(\widetilde{X})}$ considered as complex in
$\gban{G}$ is a projective resolution of the
ground field $k$,
see~\cite[p.xxiii]{mythesis}. Dually, considered as a complex in $\gban{G}$
the bounded cochain complex $C_{b}^{\ast}{(\widetilde{X})}$ is an
injective resolution of the ground field. Our proof of these facts 
relies on one of the main results of Ivanov's proof of Gromov's 
theorem, whence the hypothesis that $X$ be countable. Since 
\[
C^{\ell^{1}}_{\ast}{(X)} \cong (C^{\ell^{1}}_{\ast}{(\widetilde{X})})_{G} \cong
\bfL^{-}{({-})_{G}}(k)
\quad \text{and} \quad 
C_{b}^{\ast}{(X)} \cong (C_{b}^{\ast}{(\widetilde{X})})^{G} \cong 
\bfR^{+}{({-})^{G}}(k)
\]
we obtain the following variant of Gromov's theorem and the
Matsumoto-Morita conjecture:

\begin{Thmnn}
  Let $X$ be a connected and countable CW-complex and let $G =
  \pi_{1}{(X)}$ be its fundamental group. There are canonical isomorphisms:
  \[
  \scrH^{\ell^{1}}_{\ast}{(X)} \cong \scrH^{\ell^{1}}_{\ast}{(G,k)}
  \qquad \text{and} \qquad
  \scrH_{b}^{\ast}{(X)} \cong \scrH_{b}^{\ast}{(G,k)}.
  \]
\end{Thmnn}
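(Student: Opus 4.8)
The plan is to read the statement off the definitions of $\scrH^{\ell^{1}}_{\ast}$ and $\scrH_{b}^{\ast}$, using the two facts recalled above: that $C^{\ell^{1}}_{\ast}(\widetilde{X})$, viewed as a complex in $\gban{G}$, is a projective resolution of the trivial module $k$, and dually that $C_{b}^{\ast}(\widetilde{X})$ is an injective resolution of $k$. I will carry out the homological assertion; the cohomological one follows by the formally dual argument, interchanging projectives with injectives, $({-})_{G}$ with $({-})^{G}$, $\bfL^{-}$ with $\bfR^{+}$, and $H_{r}^{-n}$ with $H_{\ell}^{n}$.

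The first step is the chain-level identification $(C^{\ell^{1}}_{\ast}(\widetilde{X}))_{G}\cong C^{\ell^{1}}_{\ast}(X)$ in $\Ban$. As $\widetilde{X}\to X$ is the universal covering, $G=\pi_{1}(X)$ acts freely by deck transformations, hence freely on the set $\mathrm{Sing}_{n}(\widetilde{X})$ of singular $n$-simplices of $\widetilde{X}$, with orbit set canonically $\mathrm{Sing}_{n}(X)$. Thus $C^{\ell^{1}}_{n}(\widetilde{X})=\ell^{1}(\mathrm{Sing}_{n}(\widetilde{X}))$ is a free Banach $G$-module, and a direct computation with the $\ell^{1}$-norm shows that its coinvariants --- the Hausdorff quotient by the closed span of the elements $g\xi-\xi$ --- are \emph{isometrically} $\ell^{1}(\mathrm{Sing}_{n}(\widetilde{X})/G)=\ell^{1}(\mathrm{Sing}_{n}(X))=C^{\ell^{1}}_{n}(X)$. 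These isometries commute with the ($G$-equivariant) boundary operators and are natural in $X$, so they assemble to the desired isomorphism of complexes.

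Because $C^{\ell^{1}}_{\ast}(\widetilde{X})\to k$ is a projective resolution in $\gban{G}$, the left derived functor $\bfL^{-}({-})_{G}$ at $k$ is computed by applying $({-})_{G}$ to this resolution, so $\bfL^{-}({-})_{G}(k)\cong(C^{\ell^{1}}_{\ast}(\widetilde{X}))_{G}$ in $\D^{-}(\Ban)$; combined with the first step this yields a canonical isomorphism $\bfL^{-}({-})_{G}(k)\cong C^{\ell^{1}}_{\ast}(X)$ --- canonical because any two projective resolutions of $k$ are canonically isomorphic in $\D^{-}(\gban{G})$ and the augmentation of $C^{\ell^{1}}_{\ast}(\widetilde{X})$ is canonical. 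Applying the homological functor $H_{r}^{-n}\colon\D(\Ban)\to\scrC_{r}(\Ban)$ turns the left-hand side into $H_{r}^{-n}(\bfL^{-}({-})_{G}(k))=\scrH^{\ell^{1}}_{n}(G,k)$ and the right-hand side into $H_{r}^{-n}(C^{\ell^{1}}_{\ast}(X))=\scrH^{\ell^{1}}_{n}(X)$, both by definition; this is the first isomorphism. Running the dual chain --- the injective resolution $C_{b}^{\ast}(\widetilde{X})\to k$, the isometric identification $(C_{b}^{\ast}(\widetilde{X}))^{G}\cong C_{b}^{\ast}(X)$ given by functions constant on $G$-orbits of simplices, and the functor $H_{\ell}^{n}$ --- gives $\scrH_{b}^{\ast}(X)\cong\scrH_{b}^{\ast}(G,k)$.

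Once the two cited resolution statements are granted, none of the steps above is a genuine obstacle; the argument is a formal assembly. The real content, and the source of the hypothesis that $X$ be countable, is concentrated entirely in the earlier fact that the $\ell^{1}$-chains on $\widetilde{X}$ resolve $k$, which rests on Ivanov's analysis in his proof of Gromov's theorem. The one point in the present argument that warrants a second look is the word \emph{isometric} in the first step: Banach coinvariants are a Hausdorff quotient whose quotient seminorm could a priori be strictly smaller than expected, and one must check that it does nothing beyond identifying $G$-orbits of simplices --- which is exactly where the freeness of the action on simplices, together with the fact that $\ell^{1}$ carries disjoint unions of sets to $\ell^{1}$-direct sums, enters.
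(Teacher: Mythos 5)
Your argument is correct and is essentially the paper's own: the paper likewise deduces the theorem from the facts that $C^{\ell^{1}}_{\ast}(\widetilde{X})$ is a projective resolution of $k$ in $\gban{G}$ (resp.\ $C_{b}^{\ast}(\widetilde{X})$ an injective resolution), that $(C^{\ell^{1}}_{\ast}(\widetilde{X}))_{G}\cong C^{\ell^{1}}_{\ast}(X)$ and $(C_{b}^{\ast}(\widetilde{X}))^{G}\cong C_{b}^{\ast}(X)$, and then applies $H_{r}^{-n}$ and $H_{\ell}^{n}$. Your extra care about the coinvariants being an isometric identification (via freeness of the deck action on singular simplices) is a welcome elaboration of a step the paper leaves implicit, but it is not a different route.
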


\begin{Remnn}
  Notice that we deduced the theorem from the fact that the
  \emph{complexes} computing $\ell^{1}$-homology and bounded
  cohomology are invariants of the fundamental group in the derived
  category $\D{(\Ban)}$.
\end{Remnn}

\begin{Remnn}
  For connected (countable) CW-complexes, 
  L\"oh introduced $\ell^{1}$-homology and bounded cohomology with
  \emph{twisted coefficients}, see \cite[p.27]{toolongthesis}.
  Let $M$ be a Banach
  $G$-module, equip the projective tensor product
  complex $C^{\ell^{1}}_{\ast}{(\widetilde{X})} \prtens M$ 
  with the diagonal $G$-action and apply the co-invariants. In other
  words, she considers
  \[
  C^{\ell^{1}}_{\ast}{(\widetilde{X})} \prtens_{G} M \cong 
  k \prtens_{G}^{\bfL^{-}} M,
  \]
  where the right hand side shows that this complex is an invariant of the
  fundamental group in $\D{(\Ban)}$.
  Similarly, for bounded cohomology, she considers
  the complex
  \[
  \Hom_{G}{(C_{\ast}^{\ell^{1}}(\widetilde{X}), M)} \cong
  \bfR^{+}\Hom_{G}{(k,M)}.
  \]
  Using the facts that $k \prtens_{G} {-} \cong ({-})_G$ and
  $\Hom_{G}{(k,{-})} \cong ({-})^{G}$ as well as the balance of
  the derived tensor product and derived $\Hom$, we immediately conclude that
  these complexes compute
  $\scrH^{\ell^{1}}_{\ast}{(G,M)}$ and $\scrH_{b}^{\ast}{(G,M)}$.
\end{Remnn}

\begin{Remnn}
  The previous remark and our duality theorem constitute a
  rather trivial \emph{universal coefficient theorem} for
  $\ell^{1}$-homology and bounded cohomology with twisted
  coefficients of countable and connected CW-complexes---provided
  that one is willing to accept
  our definition of $\ell^{1}$-homology as the correct one.
\end{Remnn}

\section{Cohomology in Quasi-Abelian Categories}

Let $\scrA$ be an abelian category and consider a complex
\[
A^{\bullet} = (A' \xrightarrow{f} A \xrightarrow{g} A'')
\]
in $\scrA$, that is, $gf = 0$. Since the compositions $\Im{f} \mono A
\to A''$ and $A' \to A \epi \Im{g}$ are both zero we obtain a
commutative diagram
\[
\xymatrix{
  & \Im{f} \ar@{ >->}[rr]^{\varphi} \ar@{ >->}[dr] & & \Ker{g} \ar@{
    >->}[dl] \\
  A' \ar@{->>}[ur] \ar[rr]^{f} & & A \ar@{->>}[dl] \ar@{->>}[dr]
  \ar[rr]^{g} & & A'' \\
  & \Coker{f} \ar@{->>}[rr]^{\psi} & & \Im{g} \ar@{ >->}[ur]
}
\]
and the (co)homology of $A^{\bullet}$ is defined to be any one of the
isomorphic objects
\[
H(A^{\bullet}) \cong \Coker{\varphi} \cong \Ker{\psi} \cong \Im{u},
\]
where $u$ is the morphism $\Ker{g} \to \Coker{f}$, see 
e.g.~\cite[p.178]{MR2182076}.

Recall the notion of a quasi-abelian category in the sense of 
Yoneda~\cite{MR0225854} (see also Prosmans~\cite{MR1749013} and
Schneiders~\cite{MR1779315}): an additive category $\scrA$ is called
\emph{quasi-abelian} if
\begin{enumerate}[(i)]
  \item
    every morphism has a kernel and a cokernel,
  \item
    the class of all kernel-cokernel pairs in
    $\scrA$ is an exact structure in the sense of
    Quillen~\cite{MR0338129}: every kernel is the kernel of its
    cokernel, the class of kernels is closed under composition 
    and push-outs along arbitrary morphisms and, dually, every
    cokernel is the cokernel of its kernel, the class of cokernels is
    closed under composition and pull-backs along arbitrary morphisms.
\end{enumerate}
If $\scrA$ is quasi-abelian but not abelian, the situation is no longer as
straightforward as before. Assume for simplicity that
$\scrA$ has enough projectives and enough injectives. We obtain
the diagram
\[
\xymatrix{
  & \Coim{f} \ar@{ >.>}[rr]^{\varphi} \ar@{ >.>}[dr] & & \Ker{g} \ar@{
    >->}[dl] \\
  A' \ar@{->>}[ur] \ar[rr]^{f} & & A \ar@{->>}[dl] \ar@{.>>}[dr]
  \ar[rr]^{g} & & A'' \\
  & \Coker{f} \ar@{.>>}[rr]^{\psi} & & \Im{g} \ar@{ >->}[ur]
}
\]
in which the dotted arrows are categorical monics or epics (here we
use that there are enough projectives and enough injectives) that may or
may not be kernels or cokernels.

\begin{Rem}[Huber]
  \label{rem:huebi}
  The morphism $u: \Ker{g} \to \Coker{f}$ is \emph{strict} in
  the sense that it factors as $\Ker{g} \epi X \mono \Coker{f}$, so
  that $X \cong \Coim{u} \cong \Im{u}$.
  
  Since $f$ factors over $\Im{f}$ and $gf = 0$, the morphism
  $\Im{f} \mono A$ factors over $\Ker{g} \mono A$.
  The morphism $v:\Im{f} \to \Ker{g}$ is an admissible
  monic by Quillen's ``obscure axiom'',
  see~\cite[A.1, c)$^{\opp}$]{MR1052551}. Let $X = \Coker{v}$ and form
  the following push-out diagram
  \[
  \xymatrix{
    \Ker{g} \ar@{ >->}[d] \ar@{->>}[r] &
    X \ar@{ >->}[d] \\
    A \ar[r] & Y
  }
  \]
  which by \cite[A.1, 1st step]{MR1052551} is bicartesian. It is easy
  to see that $A \to Y$ is the cokernel of $\Im{f} \mono A$ so that 
  $Y \cong \Coker{f}$ (it is a general fact that in an exact category 
  the push-out of an admissible epic along an admissible monic yields 
  an admissible epic). From this diagram one readily reads off
  that
  \[
  \Ker{u} \cong \Im{f} \quad \text{and} \quad \Coker{u} \cong \Coim{g},
  \]
  so $X \cong \Coim{u} \cong \Im{u}$ as claimed.
\end{Rem}

\begin{Rem}
  \label{rem:huebi-2}
  The object $X$ constructed in the previous remark is at the same
  time the cokernel of $\Im{f} \mono \Ker{g}$ and the kernel of
  $\Coker{f} \epi \Coim{g}$. If the quasi-abelian category $\scrA$ is
  such that for each morphism $h$ the morphism
  $\Coim{h} \to \Im{h}$ is categorically monic and epic
  then it follows that 
  $\Coker{\varphi} \cong X \cong \Ker{\psi}$. This is the case if
  $\scrA$ has enough projective and enough injective objects, however, the
  author does not know whether this is true in general.
\end{Rem}

\begin{Exm}
  Let $\scrA = \Ban$ be the category of Banach spaces and consider 
  the complex
  \[
  \ell^{1} \xrightarrow{\mat{i \\ 0}} c_{0} \oplus \ell^{1}
  \xrightarrow{\mat{0 & i}} c_{0}
  \]
  where $i: \ell^{1} \to c_{0}$ is the obvious inclusion. We have
  \[
  \Coim{\mat{i \\ 0}} = \ell^{1}, \quad
  \Ker{\mat{0 & i}} = c_{0}, \quad
  \Coker{\mat{i \\ 0}} = \ell^{1}, \quad
  \Im{\mat{0 & i}} = c_{0},
  \]
  which shows that the dotted morphisms are indeed not kernels or
  cokernels in general.
\end{Exm}

By the theory of $t$-structures, both $\varphi$ and $\psi$ yield
legitimate notions of cohomology: $\varphi$ represents
$H_{\ell}^{0}(A^{\bullet})$ in the left heart $\scrC_{\ell}{(\scrA)}$ and 
$\psi$ represents $H_{r}^{0}{(A^{\bullet})}$ in the right heart
$\scrC_{r}{(\scrA)}$ of the derived category $\D{(\scrA)}$ if
$A^{\bullet}$ is considered as a complex concentrated in degrees
$-1,0,1$. To be more specific, we need two definitions.

\begin{Def}
  \label{def:truncation-functors}
  Let $A^{\bullet} =
  (\cdots \xrightarrow{} A^{-2} \xrightarrow{d^{-2}} A^{-1} 
  \xrightarrow{d^{-1}} A^{0} \xrightarrow{d^{0}} A^{1}
  \xrightarrow{} \cdots)$ 
  be a complex in the quasi-abelian category
  $\scrA$. The \emph{left truncation functors} are defined by
  \[
  \tau_{\ell}^{\leq 0}A^{\bullet} = 
  (\cdots \xrightarrow{} A^{-2} \xrightarrow{d^{-2}} A^{-1} 
  \xrightarrow{} \Ker{d^{0}} \xrightarrow{} 0
  \xrightarrow{} 0 \xrightarrow{} \cdots)
  \]
  and
  \[
  \tau_{\ell}^{\geq 0}A^{\bullet} =
  (\cdots \xrightarrow{} 0 \xrightarrow{} \Coim{d^{-1}} \xrightarrow{}
  A^{0} \xrightarrow{d^{0}} A^{1} 
  \xrightarrow{} \cdots)
  \]
  while the \emph{right truncation functors} are given by
  \[
  \tau_{r}^{\leq 0} A^{\bullet} =
  (\cdots \xrightarrow{} A^{-1} \xrightarrow{d^{-1}} A^{0}
  \xrightarrow{} \Im{d^{0}} \xrightarrow{} 0 \xrightarrow{} \cdots)
  \]
  and
  \[
  \tau_{r}^{\geq 0} A^{\bullet} = (\cdots \xrightarrow{} 0
  \xrightarrow{} \Coker{d^{-1}} \xrightarrow{} A^{1}
  \xrightarrow{d^{1}} A^{2} \xrightarrow{} \cdots).
  \]
  The truncation functors yield endofunctors of the derived category
  $\D{(\scrA)}$. As usual, we put for $n \in \bbZ$
  \[
  \tau_{\ell}^{\leq n} = \Sigma^{-n} \circ \tau_{\ell}^{\leq 0} \circ
  \Sigma^{n},
  \]
  \emph{etc.}
\end{Def}

\begin{Def}
  \label{def:can-t-structures}
  Denote by $\D^{\leq 0}_{\ell}{(\scrA)}$ the essential
  image of $\tau^{\leq 0}_{\ell}$, \emph{etc.} It is not difficult to prove that
  $(\D^{\leq 0}_{\ell}{(\scrA)}, \D^{\geq 0}_{\ell}{(\scrA)})$ is a
  \emph{$t$-structure}, see \cite[1.3.1, 1.3.22]{MR751966}, which we
  call the \emph{left} $t$-structure. By duality
  $(\D^{\leq 0}_{r}{(\scrA)}, \D^{\geq 0}_{r}{(\scrA)})$ is a
  $t$-structure as well and we call it the \emph{right} $t$-structure.
  The corresponding (left and right) \emph{hearts} are
  \[
  \scrC_{\ell}{(\scrA)} = 
  \D^{\leq 0}_{\ell}{(\scrA)} \cap \D^{\geq 0}_{\ell}{(\scrA)} \qquad
  \text{and} \qquad
  \scrC_{r}{(\scrA)} = 
  \D^{\leq 0}_{r}{(\scrA)} \cap \D^{\geq 0}_{r}{(\scrA)},
  \]
  they are admissible abelian subcategories of $\D{(\scrA)}$. The
  associated homological functors are
  $H^{0}_{\ell} = \tau^{\leq 0}_{\ell} \tau^{\geq 0}_{\ell}:
  \D{(\scrA)} \to
  \scrC_{\ell}{(\scrA)}$ and 
  $H^{0}_{r} = \tau_{r}^{\leq 0} \tau_{r}^{\geq 0}$. 
\end{Def}

There is the following explicit description of
$\scrC_{\ell}{(\scrA)}$: objects are represented by 
a monic $(A^{-1} \hookrightarrow A^{0})$ in $\scrA$ while the
morphisms are obtained from the morphisms of pairs by dividing out the
homotopy equivalence relation and inverting quasi-isomorphisms
(bicartesian squares) formally, see \cite[1.3.22]{MR751966}, 
\cite[1.5.7]{MR726427} or
\cite[Construction~2.2.1, p.35]{mythesis}.

\begin{Prop}
  \label{prop:inclusion-functor}
  The \emph{inclusion functor}
  $\iota_{\ell}: \scrA \to \scrC_{\ell}{(\scrA)}$ given on
  objects by $A \mapsto (0 \hookrightarrow A)$ preserves monics, is
  fully faithful, exact and reflects exactness. Its image is closed
  under extensions in $\scrC_{\ell}{(\scrA)}$. It has a left adjoint
  $q_{\ell}$ given on objects by $\Coker{(A^{-1} \hookrightarrow A^{0})}$.
  Every exact and monic-preserving functor $\scrA \to \scrB$ to an
  abelian category factors uniquely over an exact functor
  $\scrC_{\ell}{(\scrA)} \to \scrB$.
\end{Prop}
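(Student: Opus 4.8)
The plan is to recognise $\iota_\ell$ as the restriction to $\scrA$ of the canonical functor $\scrA\to\D(\scrA)$ and then to read every assertion off from the truncation formulae of Definition~\ref{def:truncation-functors} together with the standard description of kernels and cokernels in the heart of a $t$-structure (\cite{MR751966}); most of the statement is, in one guise or another, already contained in Schneiders~\cite{MR1779315}, so I would only indicate the arguments. The two facts I would establish first are: (a)~for every $A\in\scrA$ the complex $\iota_\ell A$, concentrated in degree $0$, lies in $\scrC_\ell(\scrA)$---because $\tau_\ell^{\geq 0}$ fixes it ($\Coim(0\to A)=0$) and $\tau_\ell^{\leq 0}$ fixes it ($\Ker(A\to 0)=A$)---so that, since $\scrA\to\D(\scrA)$ is fully faithful for a quasi-abelian category (see~\cite{MR1779315}) and the heart is a full subcategory of $\D(\scrA)$, the functor $\iota_\ell$ is fully faithful and in particular reflects zero objects; and (b)~for any $f\colon A\to B$ in $\scrA$ the mapping cone of $\iota_\ell f$ is the two-term complex $(A\xrightarrow{f}B)$ placed in degrees $-1,0$, whence the identities $\Ker_{\scrC_\ell(\scrA)}(g)=H^{-1}(\mathrm{Cone}(g))$, $\Coker_{\scrC_\ell(\scrA)}(g)=H^{0}(\mathrm{Cone}(g))$ for kernels and cokernels in the heart, combined with $\tau_\ell^{\geq 0}$ and $\tau_\ell^{\leq 0}$, give $\Ker_{\scrC_\ell(\scrA)}(\iota_\ell f)\cong\iota_\ell(\Ker_\scrA f)$ and $\Coker_{\scrC_\ell(\scrA)}(\iota_\ell f)\cong(\Coim_\scrA f\mono B)$. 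The first isomorphism already shows that $\iota_\ell$ preserves monics, and more precisely that $\iota_\ell f$ is monic in $\scrC_\ell(\scrA)$ exactly when $f$ is monic in $\scrA$.

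For the left adjoint I would use the distinguished triangle $A^{-1}\xrightarrow{d}A^0\to X\to\Sigma A^{-1}$ presenting $X=(A^{-1}\xrightarrow{d}A^0)$ as the mapping cone of $d$; applying $\Hom_{\D(\scrA)}(-,\iota_\ell A)$ and using that $\Hom_{\D(\scrA)}(\Sigma A^{-1},\iota_\ell A)=0$ (it is a negative $\mathrm{Ext}$-group) produces a natural isomorphism $\Hom_{\scrC_\ell(\scrA)}(X,\iota_\ell A)\cong\Hom_\scrA(\Coker_\scrA d,A)$. Hence $q_\ell(X):=\Coker_\scrA(A^{-1}\to A^0)$ is left adjoint to $\iota_\ell$, and the counit $q_\ell\iota_\ell\to\mathrm{id}_\scrA$ is an isomorphism, re-proving full faithfulness. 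Exactness of $\iota_\ell$ I would get from the fact that a conflation $A'\mono A\epi A''$ in $\scrA$ induces a distinguished triangle $\iota_\ell A'\to\iota_\ell A\to\iota_\ell A''\to\Sigma\iota_\ell A'$ all of whose vertices lie in the heart, so that the long exact $H^\bullet_\ell$-sequence degenerates to a short exact sequence $0\to\iota_\ell A'\to\iota_\ell A\to\iota_\ell A''\to 0$ in $\scrC_\ell(\scrA)$.

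The reflection of exactness is where the real work lies. Starting from a complex $A'\xrightarrow{i}A\xrightarrow{p}A''$ in $\scrA$ whose image under $\iota_\ell$ is short exact in $\scrC_\ell(\scrA)$: monicity of $\iota_\ell i$ forces $i$ monic in $\scrA$ (by (b), using that $\iota_\ell$ reflects zero objects), hence $\Coim_\scrA i=A'$; then the $\Coker$-formula of (b), applied to the fact that $\iota_\ell p$ is the cokernel of $\iota_\ell i$, identifies $\iota_\ell A''$ with the object $(A'\xrightarrow{i}A)$ of $\scrC_\ell(\scrA)$, and applying $q_\ell$ identifies $p$ with the canonical epic $A\epi\Coker_\scrA i$; dually, the $\Ker$-formula applied to the fact that $\iota_\ell i$ is the kernel of $\iota_\ell p$ identifies $i$ with $A'\xrightarrow{\sim}\Ker_\scrA p\mono A$. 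Thus $i$ is the kernel of $p$ and $p$ the cokernel of $i$, i.e.\ the original complex is a conflation. For the closure of the image under extensions I would represent a short exact sequence $0\to\iota_\ell A'\to Y\to\iota_\ell A''\to 0$ in $\scrC_\ell(\scrA)$ by a distinguished triangle, view its connecting morphism as an element of $\Hom_{\D(\scrA)}(\iota_\ell A'',\Sigma\iota_\ell A')\cong\Hom_{\D(\scrA)}(A'',\Sigma A')$, identify the latter group with the group of extensions of $A''$ by $A'$ in the exact category $\scrA$, realise the class by a conflation $A'\mono A\epi A''$, and conclude $Y\cong\iota_\ell A$ by comparing the two triangles, which share the same connecting morphism.

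For the universal property, given an exact monic-preserving functor $F\colon\scrA\to\scrB$ with $\scrB$ abelian, I would extend $F$ to the exact functor $\D F\colon\D(\scrA)\to\D(\scrB)$ and put $\bar F:=H^0\circ\D F$ restricted to $\scrC_\ell(\scrA)$, where $H^0\colon\D(\scrB)\to\scrB$ is ordinary cohomology; since $F$ preserves monics, $\D F(X)=(FA^{-1}\to FA^0)$ is concentrated in degree $0$ for $X=(A^{-1}\mono A^0)\in\scrC_\ell(\scrA)$, so $\bar F(A^{-1}\mono A^0)\cong\Coker_\scrB(FA^{-1}\mono FA^0)$ (one can also take this last formula as the definition of $\bar F$ on the explicit model and check directly that it respects homotopies and inverts the quasi-isomorphisms). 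Then $\bar F\iota_\ell=F$, and $\bar F$ is exact because a short exact sequence in $\scrC_\ell(\scrA)$ is a triangle with vertices in the heart whose image under $H^0\circ\D F$ again collapses to a short exact sequence. Uniqueness holds since every $X=(A^{-1}\xrightarrow{d}A^0)\in\scrC_\ell(\scrA)$ sits---$d$ being monic and using the $\Coker$-formula---in the short exact sequence $0\to\iota_\ell A^{-1}\xrightarrow{\iota_\ell d}\iota_\ell A^0\to X\to 0$, which any exact $G$ with $G\iota_\ell=F$ must send to $0\to FA^{-1}\to FA^0\to GX\to 0$, forcing $GX\cong\Coker_\scrB(Fd)\cong\bar F X$ naturally in $X$. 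I expect the main obstacle to be precisely the reflection of exactness together with the extension-closure: both rest on the explicit kernel/cokernel computations of~(b) via Definition~\ref{def:truncation-functors} and on matching the connecting morphism of a distinguished triangle with a Yoneda extension class in the exact category $\scrA$. Everything else is formal once $\iota_\ell$ is seen as the inclusion of $\scrA$ into $\D(\scrA)$ as the complexes concentrated in degree $0$; the only recurring subtlety is that, $\scrA$ being merely quasi-abelian, ``exact'' for $\iota_\ell$, $q_\ell$ and $\bar F$ must be read throughout as ``preserves (respectively reflects) short exact sequences''.
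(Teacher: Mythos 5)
Your proposal is correct, but there is little in the paper to compare it with: the paper's entire proof of Proposition~\ref{prop:inclusion-functor} is a deferral to ``well-known'' results in Chapter~III.2 of \cite{mythesis}, and the explicit description of $\scrC_{\ell}{(\scrA)}$ recalled just before the proposition indicates that the cited source argues in the concrete model (monics modulo homotopy, with bicartesian squares inverted) rather than in the abstract $t$-structure formalism you use. Your route---computing kernels and cokernels in the heart as $H^{-1}_{\ell}$ and $H^{0}_{\ell}$ of a mapping cone and reading everything off Definition~\ref{def:truncation-functors}---is a legitimate, essentially self-contained alternative; its payoff is that the formulas $\Ker_{\scrC_{\ell}{(\scrA)}}(\iota_{\ell}f)\cong\iota_{\ell}(\Ker_{\scrA}f)$ and $\Coker_{\scrC_{\ell}{(\scrA)}}(\iota_{\ell}f)\cong(\Coim_{\scrA}f\mono B)$ become explicit, after which preservation and reflection of monics and of exactness, the adjunction, and the uniqueness part of the universal property all follow mechanically, where the concrete model would require diagram chases with bicartesian squares. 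Two steps in your sketch rest on facts that deserve a citation rather than being treated as formal: the identification of $\Hom_{\D{(\scrA)}}(A'',\Sigma A')$ with the Yoneda extension group of the exact category $\scrA$, which carries the whole weight of the extension-closure claim and is not a consequence of the $t$-structure axioms alone; and the existence of the induced functor $\D{(\scrA)}\to\D{(\scrB)}$ for an exact $F$, which requires checking that $F$ sends strictly exact complexes to exact ones (immediate once such a complex is spliced into conflations, but worth saying). Finally, note that ``preserves monics'' must consistently mean categorical monomorphisms rather than admissible ones, since objects of $\scrC_{\ell}{(\scrA)}$ are represented by arbitrary monics; your computation of $\Coker{(\iota_{\ell}f)}$ shows you are using it in that sense.
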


\begin{proof}
  This is all well-known, see e.g. \cite[Chapter~III.2]{mythesis}.
\end{proof}

\section{$\ell^{1}$-Homology and Bounded Cohomology}

Let $G$ be a group and let $\gban{G}$ be the category of isometric
representations of $G$ on Banach spaces and $G$-equivariant bounded
linear maps. It is a simple consequence of the open mapping theorem
that $\gban{G}$ is quasi-abelian.

\begin{Not}
  Let $\ell^{1}{(G)}$ be the Banach
  group algebra and let $E$ be a Banach space. 
  The \emph{induced Banach $G$-module} is
  \[
  \uparrow\!E = \ell^{1}{(G)} \prtens E \cong \ell^{1}{(G,E)}
  \]
  with the left $G$-action on the factor $\ell^{1}{(G)}$. The
  \emph{coinduced Banach $G$-module} is
  \[
  \Uparrow\!E := 
  \Hom_{\Ban}{(\ell^{1}{(G)}, E)} \cong \ell^{\infty}{(G,E)}
  \]
  with the action coming from the right action of $G$ on
  $\ell^{1}{(G)}$.
\end{Not}

\begin{Not}
  Let $M \in \gban{G}$ be a Banach $G$-module.
  The \emph{module of coinvariants} of
  $M$ is the Banach space
  \[
  M_{G} = M / \clspan\{m - gm\,:\,m \in M, \, g \in G\}
  \]
  and the \emph{module of invariants} is the Banach space
  \[
  M^{G} = \{m \in M\,:\,\text{$gm = m$ for all $g \in G$}\}.
  \]
\end{Not}

At the heart of the homological algebra of $\ell^{1}$-homology and
bounded cohomology is the following simple result which is
proved by direct inspection:

\begin{Thm}[{Fundamental Adjunctions~\cite[p.xviii]{mythesis}}]
  \label{thm:fundamental-adjunctions}
  Let $\downarrow: \gban{G} \to \Ban$ be the forgetful functor and let 
  $\leftidx_{\varepsilon}{({-})}: \Ban \to \gban{G}$ be the trivial
  module functor. There are two adjoint triples of functors
  \[
  \vcenter{
    \xymatrix{
      \gban{G} \ar[d]^-{\downarrow} \\
      \Ban 
      \ar@<-2ex>@/_1.5pc/[u]_-{\Uparrow} 
      \ar@<2ex>@/^1.5pc/[u]^-{\uparrow}
    }
  }
  \qquad \text{and} \qquad
  \vcenter{
    \xymatrix{
      \gban{G} 
      \ar@<-2ex>@/_1.5pc/[d]_-{({-})_{G}} 
      \ar@<2ex>@/^1.5pc/[d]^-{({-})^{G}} \\
      \Ban \ar[u]_-{\leftidx_{\varepsilon}{({-})}}
    }
  }
  \]
  that is to say $\uparrow$ is left adjoint to $\downarrow$ and
  $\downarrow$ is left adjoint to $\Uparrow$, etc.

  The forgetful functor, induction, coinduction are all exact as well
  as the trivial module functor. \qed
\end{Thm}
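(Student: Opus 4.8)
The plan is to produce each of the four adjunctions by writing down the relevant bijection of morphism sets explicitly, checking naturality, and then reading off exactness from the concrete models. I would begin with the triple around the forgetful functor. For $\uparrow\,\dashv\,\downarrow$ I would invoke the universal property of the projective tensor product: a bounded linear map $\ell^{1}(G)\prtens E\to M$ is the same datum as a bounded bilinear map $\ell^{1}(G)\times E\to M$, and requiring the associated map out of $\uparrow\!E$ to be $G$-equivariant for the left action on the $\ell^{1}(G)$-factor forces it to have the form $\delta_{g}\otimes e\mapsto g\,\varphi(e)$ for a unique bounded linear $\varphi\colon E\to\downarrow\!M$; conversely every such $\varphi$ produces such a map, and the correspondence is norm-preserving because $G$ acts isometrically. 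This yields a natural isometric bijection $\Hom_{\gban{G}}(\uparrow\!E,M)\cong\Hom_{\Ban}(E,\downarrow\!M)$, with unit $e\mapsto\delta_{1}\otimes e$ and counit $\delta_{g}\otimes m\mapsto gm$. For $\downarrow\,\dashv\,\Uparrow$ I would argue through the isometric identification $\Uparrow\!E\cong\ell^{\infty}(G,E)$: to $\psi\in\Hom_{\Ban}(\downarrow\!M,E)$ attach the map $m\mapsto(h\mapsto\psi(hm))$, a bounded $G$-module map $M\to\Uparrow\!E$ of the same norm, whose assignment is inverse to precomposition with the counit $\downarrow\!\Uparrow\!E\to E$, ``evaluate at $1\in G$''.

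The second triple is more elementary. The bijection $\Hom_{\Ban}(M_{G},N)\cong\Hom_{\gban{G}}(M,\leftidx_{\varepsilon}{N})$ is precisely the universal property of the Hausdorff quotient Banach space: a bounded map $M\to N$ is $G$-equivariant into the trivial module $\leftidx_{\varepsilon}{N}$ iff it annihilates every $m-gm$, iff it annihilates the closed span of these vectors, iff it factors uniquely and isometrically through $M_{G}$. The bijection $\Hom_{\gban{G}}(\leftidx_{\varepsilon}{N},M)\cong\Hom_{\Ban}(N,M^{G})$ is immediate: a $G$-equivariant bounded map out of $\leftidx_{\varepsilon}{N}$ lands in $M^{G}$ because the action on $N$ is trivial, and conversely every bounded $N\to M^{G}\hookrightarrow M$ is automatically equivariant; both directions preserve norms. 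Naturality of all four bijections in both variables is a routine diagram chase that I would carry out but not spell out.

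Exactness comes last. Because $\gban{G}$ is quasi-abelian via the open mapping theorem, a morphism of $\gban{G}$ is strict exactly when its underlying map of Banach spaces is; hence $\downarrow$ is exact and reflects exactness, and $\leftidx_{\varepsilon}{({-})}$ is exact since $\downarrow\circ\leftidx_{\varepsilon}{({-})}=\mathrm{id}_{\Ban}$ (equivalently, $\downarrow$ and $\leftidx_{\varepsilon}{({-})}$ are each simultaneously a left and a right adjoint, hence exact). For induction and coinduction the adjunction formalism alone gives only half of the claim, so here I would compute directly with the models $\uparrow\,\cong\,\ell^{1}(G,{-})$ and $\Uparrow\,\cong\,\ell^{\infty}(G,{-})$: an admissible short exact sequence in $\Ban$ is an isometric embedding onto a closed subspace together with its quotient map, and both $\ell^{1}(G,{-})$ and $\ell^{\infty}(G,{-})$ preserve exactly this structure when applied pointwise — for $\ell^{\infty}(G,{-})$ one uses that an admissible epic $B\epi C$ admits liftings of norm at most $(1+\delta)\|{-}\|$, so that a bounded $C$-valued function may be lifted pointwise to a uniformly bounded $B$-valued one.

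The proof has no deep step; the only points requiring genuine care are the two ``analytic'' verifications: that the morphism-set bijections for $\uparrow$ and $\Uparrow$ are truly isometric (resting on the universal property of $\prtens$ and on the isometric identifications $\uparrow\!E\cong\ell^{1}(G,E)$ and $\Uparrow\!E\cong\ell^{\infty}(G,E)$), and that the full exactness of coinduction really needs the approximate-lifting characterization of admissible epimorphisms in $\Ban$ rather than merely the fact that $\Uparrow$ is a right adjoint. Everything else is bookkeeping.
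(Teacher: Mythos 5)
Your proof is correct and is essentially the paper's own argument: the paper dismisses this theorem as ``proved by direct inspection'' (deferring to \cite[p.xviii]{mythesis}), and your explicit hom-set bijections with units/counits, plus the pointwise-lifting verification that $\ell^{1}(G,{-})$ and $\ell^{\infty}(G,{-})$ preserve admissible epics, is precisely that inspection carried out. The only cosmetic inaccuracy is describing admissible monics in $\Ban$ as isometric embeddings --- they are merely topological isomorphisms onto closed subspaces --- but this does not affect any step.
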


The most important consequence for the present work is:

\begin{Cor}[{\cite[p.xviii]{mythesis}}]
  There are enough projectives and enough injectives in $\gban{G}$. \qed
\end{Cor}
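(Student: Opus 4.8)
The plan is to deduce the statement formally from the Fundamental Adjunctions (Theorem~\ref{thm:fundamental-adjunctions}), from the fact that $\Ban$ has enough projectives and enough injectives, and from the observation that in the quasi-abelian category $\gban{G}$ the admissible epics are precisely the surjective $G$-maps while the admissible monics are precisely the closed-range injective $G$-maps --- both of these being consequences of the open mapping theorem.

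First I would record the routine adjunction lemma: if $L \dashv R$ is an adjunction of additive functors between quasi-abelian categories with $R$ exact, then $L$ carries projectives to projectives, and dually, if $L$ is exact, then $R$ carries injectives to injectives. Indeed, for $P$ projective and $f$ an admissible epic one has $\Hom(LP, f) \cong \Hom(P, Rf)$, and $Rf$ is again an admissible epic since $R$ is exact, whence $\Hom(P, Rf)$ is surjective; the injective case is entirely dual. Since the forgetful functor $\downarrow$, induction $\uparrow$ and coinduction $\Uparrow$ are all exact, applying this lemma to $\uparrow \dashv \downarrow$ shows that $\uparrow$ sends projective Banach spaces to projective Banach $G$-modules, and applying it to $\downarrow \dashv \Uparrow$ shows that $\Uparrow$ sends injective Banach spaces to injective Banach $G$-modules.

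To produce enough projectives, let $M \in \gban{G}$ and choose, using that $\Ban$ has enough projectives, a projective Banach space $P$ together with an epic $p \colon P \epi \downarrow\! M$. As $\uparrow$ is exact it preserves admissible epics, so $\uparrow\! p \colon \uparrow\! P \to \uparrow\!\downarrow\! M$ is an admissible epic; composing it with the counit $\varepsilon_M \colon \uparrow\!\downarrow\! M \to M$ of $\uparrow \dashv \downarrow$ --- which under the identification $\uparrow\!\downarrow\! M \cong \ell^{1}(G) \prtens M$ is the surjection $\delta_g \otimes m \mapsto gm$, hence an admissible epic by the open mapping theorem --- yields an admissible epic $\uparrow\! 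P \to M$ with $\uparrow\! P$ projective. Dually, to produce enough injectives, embed $\downarrow\! M$ into an injective Banach space $I$, apply the exact functor $\Uparrow$, and precompose with the unit $\eta_M \colon M \to \Uparrow\!\downarrow\! M \cong \ell^{\infty}(G, M)$ of $\downarrow \dashv \Uparrow$; this unit is given by $m \mapsto (g \mapsto g^{-1} m)$ and is an isometric embedding because $G$ acts by isometries, hence an admissible monic, so its composite with the admissible monic obtained by applying $\Uparrow$ to $\downarrow\! M \mono I$ realises $M$ as an admissible subobject of the injective object $\Uparrow\! I$.

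I do not expect a genuine obstacle here. The only two points requiring care are the structural verifications inside $\gban{G}$: that the counit of $\uparrow \dashv \downarrow$ is an admissible epic (it is surjective, and surjective $G$-maps are admissible epics by the open mapping theorem) and that the unit of $\downarrow \dashv \Uparrow$ is an admissible monic (it is an isometry, so its range is closed). Both are immediate, so the corollary is essentially a formal consequence of Theorem~\ref{thm:fundamental-adjunctions}.
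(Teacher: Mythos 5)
Your argument is correct and is precisely the intended one: the paper states this as an immediate corollary of the Fundamental Adjunctions (Theorem~\ref{thm:fundamental-adjunctions}), and the standard mechanism---left adjoints of exact functors preserve projectives, right adjoints of exact functors preserve injectives, with the counit $\uparrow\!\downarrow\!M \to M$ a (split, hence admissible) epic and the unit $M \to \Uparrow\!\downarrow\!M$ an isometric, hence admissible, monic---is exactly what the citation to \cite[p.xviii]{mythesis} refers to, and the paper itself uses the split-epi observation about $\downarrow\!\varepsilon_M$ again in Section~\ref{sec:canonical-resolutions}. No gaps.
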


This allows us to consider the derived functors
\[
\bfL^{-}{({-})_{G}}: \D^{-}{(\gban{G})} \to \D^{-}{(\Ban)}
\]
and
\[
\bfR^{+}{({-})}^{G}: \D^{+}{(\gban{G})} \to \D^{+}{(\Ban)}
\]
which underlie $\ell^{1}$-homology and bounded cohomology.

\begin{Def}
  Let $M \in \gban{G}$. 
  We define \emph{$\ell^{1}$-homology} as
  \[
  \scrH^{\ell^{1}}_{n}{(G, M)} := H^{-n}_{r}{(\bfL^{-}{({-})_{G}}(M))}
  \]
  and \emph{bounded cohomology} as
  \[
  \scrH_{b}^{n}{(G, M)} := H^{n}_{\ell}{(\bfR^{+}{({-})^{G}}(M))}.
  \]
\end{Def}

\begin{Thm}
  \label{thm:char-l1-homology}
  Up to unique isomorphism of $\delta$-functors there is a unique
  family of functors 
  \[
  \scrH^{\ell^{1}}_{n}{(G, {-})} : \gban{G} \to \scrC_{r}{(\Ban)},
  \quad n \in \bbZ,
  \]
  having the following properties:
  \begin{enumerate}[(i)]
    \item
      (Normalization)
      $\scrH_{0}^{\ell^{1}}{(G,M)} = (M_{G} \to 0)$ for all $M \in
      \gban{G}$.

    \item
      (Vanishing)
      $\scrH_{n}^{\ell^{1}}{(G,P)} = 0$ for all projective objects
      $P \in \gban{G}$ and all $n > 0$.

    \item
      (Long exact sequence)
      Associated to each short exact sequence $M' \mono M \epi M''$ in
      $\gban{G}$ there are morphisms
      $\delta_{n+1}:\scrH^{\ell^{1}}_{n+1}{(G,M'')} \to
      \scrH^{\ell^{1}}_{n}{(G,M')}$ depending naturally on the sequence
      and fitting into a long exact sequence
      \[
      \cdots \xrightarrow{\delta_{n+1}}
      \scrH_{n}^{\ell^{1}}{(G,M')} \xrightarrow{}
      \scrH_{n}^{\ell^{1}}{(G,M)} \xrightarrow{}
      \scrH_{n}^{\ell^{1}}{(G,M'')} \xrightarrow{\delta_{n}}
      \scrH_{n-1}^{\ell^{1}}{(G,M')} \xrightarrow{}
      \cdots
      \]
      in $\scrC_{r}{(\Ban)}$.
  \end{enumerate}
\end{Thm}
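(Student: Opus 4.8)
The plan is to recognize Theorem~\ref{thm:char-l1-homology} as an instance of the standard uniqueness theorem for universal (homological) $\delta$-functors, adapted to the target category $\scrC_{r}{(\Ban)}$, which is abelian by Definition~\ref{def:can-t-structures}. The strategy has three parts: (1) verify that the functors $\scrH^{\ell^{1}}_{n}{(G,-)} = H^{-n}_{r}{(\bfL^{-}({-})_{G})}$ do form a $\delta$-functor satisfying (i)--(iii); (2) observe that properties (ii)--(iii) force the $\delta$-functor to be \emph{universal} (effaceable in positive degrees); and (3) invoke the abstract fact that two universal $\delta$-functors agreeing in degree zero are uniquely isomorphic.

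For part (1), I would argue as follows. The functor $({-})_{G}: \gban{G} \to \Ban$ is right exact and has a left-derived functor $\bfL^{-}({-})_{G}$ landing in $\D^{-}{(\Ban)}$, since by the Corollary $\gban{G}$ has enough projectives. Applying the homological functor $H^{-n}_{r}: \D{(\Ban)} \to \scrC_{r}{(\Ban)}$ to the distinguished triangle in $\D^{-}{(\Ban)}$ arising (via the horseshoe lemma on projective resolutions) from a short exact sequence $M' \mono M \epi M''$ yields the long exact sequence in (iii) with natural connecting morphisms $\delta_{n}$; this is the general mechanism by which a homological functor out of a triangulated category produces a $\delta$-functor, and naturality in the short exact sequence is inherited from functoriality of the mapping cone construction on resolutions. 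For (ii): if $P$ is projective, then $P$ is its own projective resolution, so $\bfL^{-}({-})_{G}(P)$ is represented by the complex $P_{G}$ concentrated in degree $0$, whence $H^{-n}_{r} = 0$ for $n > 0$. For (i): in degree zero, $\bfL^{-}({-})_{G}(M)$ is represented by $P_{\bullet,G}$ for a projective resolution $P_{\bullet} \epi M$, and $H^{0}_{r}$ of a complex concentrated in nonpositive degrees is, by Definition~\ref{def:truncation-functors} and the description of $\scrC_{r}{(\Ban)}$, the object $(\Coker(d^{-1}:A^{-1} \to A^{0}) \to 0)$; since $({-})_{G}$ is right exact, $\Coker(P_{1,G} \to P_{0,G}) = M_{G}$, so $\scrH^{\ell^{1}}_{0}{(G,M)} = (M_{G} \to 0) = \iota_{r}(M_G)$, using the inclusion functor $\iota_{r}$.

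For parts (2) and (3), the key point is that a homological $\delta$-functor $(T_{n})_{n \geq 0}$ into an abelian category which vanishes on enough projectives in positive degrees is universal: given another such $\delta$-functor $(S_{n})$ and an isomorphism $T_{0} \cong S_{0}$ (or even just a morphism), dimension-shifting along a short exact sequence $K \mono P \epi M$ with $P$ projective, together with property (ii), produces morphisms $T_{n}(M) \to S_{n}(M)$ by induction on $n$, and these are forced to be isomorphisms when both sides vanish on projectives. This is the exact analogue of the Grothendieck/Cartan--Eilenberg argument; it goes through verbatim over any abelian target, in particular over $\scrC_{r}{(\Ban)}$, which is abelian by Definition~\ref{def:can-t-structures}. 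One must also extend from $n \geq 0$ to all $n \in \bbZ$: since $({-})_{G}$ is right exact and we are left-deriving, $\scrH^{\ell^{1}}_{n} = 0$ for $n < 0$ automatically (the derived functor lives in $\D^{-}$ with no positive cohomology after the relevant reindexing), so the indexing by $\bbZ$ is harmless and any competing family must vanish in negative degrees too by the long exact sequence and normalization.

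I expect the main obstacle to be bookkeeping rather than conceptual: one must be careful that the homological algebra of $\delta$-functors, which is usually stated for abelian \emph{source} categories, applies here where the source $\gban{G}$ is only quasi-abelian. The resolution is that we use \emph{projective} resolutions (which exist and behave well because $\gban{G}$ has enough projectives and projectives are well-behaved objects), that short exact sequences in the statement are genuine kernel-cokernel pairs, and that the horseshoe lemma holds in any exact category with enough projectives; so the derived functor $\bfL^{-}({-})_{G}$ and its long exact sequences are unproblematic. The only genuinely quasi-abelian subtlety---that $H^{0}_{r}$ of a complex is computed via $\Im$ rather than naive cohomology---is precisely what makes the normalization $(M_{G} \to 0)$ come out correctly, and was already accounted for in part (1). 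The detailed proof is given in~\cite[Chapter~III]{mythesis}, and I would reference it for the verification that the universality argument transfers to the quasi-abelian setting without change.
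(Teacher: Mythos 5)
Your proposal is correct and is, in substance, the argument the paper intends: the theorem is the standard uniqueness statement for a homological $\delta$-functor vanishing on projectives in positive degrees, transported to the abelian target $\scrC_{r}{(\Ban)}$, and the paper's own proof simply refers to the dual statement in the thesis, noting (as you use implicitly) that $({-})_{G}$ and $\iota_{r}$ are left adjoints, hence preserve cokernels --- which is exactly what makes the degree-zero computation give $(M_{G}\to 0)$. Where you differ from the text is in the existence half: you verify (i)--(iii) directly for $H^{-n}_{r}{(\bfL^{-}{({-})_{G}})}$ using arbitrary projective resolutions and the triangulated machinery, whereas Section~\ref{sec:canonical-resolutions} instead exhibits a concrete $\delta$-functor $H^{-\ast}_{r}{(\iota_{r}(\bot_{\ast}({-}))_{G})}$ built from the induction comonad and then invokes the uniqueness to identify it with $\scrH^{\ell^{1}}_{\ast}{(G,{-})}$. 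Your route is shorter; the paper's alternative buys the comparison with the classical bar-resolution description exploited in Section~\ref{sec:remarks}. Your handling of the quasi-abelian source (enough projectives, horseshoe lemma in an exact category, $H^{0}_{r}$ of a complex in non-positive degrees being the cokernel of the last differential) is accurate.

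One step does not hold as you state it: the claim that any competing family must vanish in negative degrees ``by the long exact sequence and normalization''. Right exactness of $T_{0}=\iota_{r}\circ({-})_{G}$ forces $\delta_{0}=0$, but nothing then prevents one from taking $T_{-1}$ to be an arbitrary nonzero exact functor with all lower connecting morphisms zero; conditions (i)--(iii) remain satisfied, so such a family is a genuinely non-isomorphic $\delta$-functor. The indexing over all of $\bbZ$ is therefore only harmless if one reads (ii) as holding for all $n\neq 0$, equivalently adds the requirement $T_{n}=0$ for $n<0$. This is an imprecision already present in the statement of the theorem rather than a defect peculiar to your argument, but your justification for it is not correct; the rest (coeffaceability via enough projectives, dimension shifting, uniqueness of the induced isomorphism) is fine.
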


\begin{proof}[On the Proof]
  This follows from dualizing the proof of the theorem on page~xiv of
  \cite{mythesis}. Notice that $({-})_{G}$ and 
  $\iota_{r}: \Ban \to \scrC_{r}{(\Ban)}, E \mapsto (E \to 0)$ 
  both have a right adjoint. An existence proof is also given in
  Section~\ref{sec:canonical-resolutions}.
\end{proof}

Now consider the duality functor $({-})^{\ast} : \gban{G} \to
\gban{G}$ and recall that it is exact, hence it extends to the derived
category $\D{(\gban{G})}$. It induces a (contravariant) \emph{duality functor}
\[
({-})^{\ast} : \scrC_{r}{(\gban{G})} \to
\scrC_{\ell}{(\gban{G})}
\]
which is explicitly given on objects by $(e: A \to B)^{\ast} = 
(e^{\ast}: B^{\ast} \to A^{\ast})$. 
\if{0}
By elementary duality theory of Banach
spaces~\cite[Chapter~4]{MR1157815}, the functor $({-})^{\ast}$ is exact and
preserves monics; by Proposition~\ref{prop:inclusion-functor} the
inclusion functor $\iota_{\ell}: \Ban \to \scrC_{\ell}{(\Ban)}$ and
hence also the composition 
$\iota_{\ell} \circ ({-})^{\ast}: \Ban^{\opp}
\to \scrC_{\ell}{(\Ban)}$ share these properties. Therefore the composition
$\iota_{\ell} \circ ({-})^{\ast}: \Ban^{\opp} \to \scrC_{\ell}{(\Ban)}$
factors uniquely over an exact functor
$\scrC_{\ell}{(\Ban^{\opp})} \to \scrC_{\ell}{(\Ban)}$. But
$(\scrC_{r}{(\Ban)})^{\opp} \cong \scrC_{\ell}{(\Ban^{\opp})}$ so
that we obtain an exact ``duality functor''
\[
({-})^{\ast}: \scrC_r{(\Ban)}^{\opp} \to \scrC_{\ell}{(\Ban)}
\]
which, by appealing to
\cite[Theorem~2.2.3~(ii), p.37; Remark~2.2.8, p.39]{mythesis}, is
explicitly given on objects by
\[
(e: A^{0} \to A^{1})^{\ast} = (e^{\ast}: (A^{1})^{\ast} \to (A^{0})^{\ast}).
\]
\fi
\begin{Prop}
  \label{prop:duality}
  The duality functor $({-})^{\ast}: \scrC_{r}{(\gban{G})}
  \to \scrC_{\ell}{(\gban{G})}$ is well-defined, 
  exact and there is a natural isomorphism of
  functors on $\D{(\gban{G})}$
  \[
  H^{n}_{\ell}{(({-})^{\ast})} \cong \left(H_{r}^{-n}{({-})}\right)^{\ast}.
  \]
\end{Prop}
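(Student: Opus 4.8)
The plan is twofold: first construct the functor $({-})^{\ast}\colon\scrC_{r}(\gban{G})\to\scrC_{\ell}(\gban{G})$ abstractly, using Proposition~\ref{prop:inclusion-functor} together with the duality of the canonical $t$-structures; then obtain the cohomological isomorphism by comparing the truncation functors of Definition~\ref{def:truncation-functors} with the termwise dual. Everything rests on two elementary facts about Banach spaces --- equally valid, $G$-equivariantly, in $\gban{G}$, the dual $G$-action being the obvious one --- both immediate from Hahn--Banach (cf.\ \cite[Chapter~4]{MR1157815}): the duality functor $({-})^{\ast}\colon\gban{G}\to\gban{G}$ is exact and sends admissible epics to admissible monics, and for every morphism $d$ in $\gban{G}$ there are canonical isomorphisms $(\Coker d)^{\ast}\cong\Ker(d^{\ast})$ and $(\Im d)^{\ast}\cong\Coim(d^{\ast})$.

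Regarding $({-})^{\ast}$ as a covariant functor $\gban{G}^{\opp}\to\gban{G}$, the first fact says it is exact and preserves admissible monics; hence so is the composite $\iota_{\ell}\circ({-})^{\ast}\colon\gban{G}^{\opp}\to\scrC_{\ell}(\gban{G})$, since by Proposition~\ref{prop:inclusion-functor} the functor $\iota_{\ell}$ is exact and monic-preserving. By the universal property in that proposition the composite factors uniquely through an exact functor $\scrC_{\ell}(\gban{G}^{\opp})\to\scrC_{\ell}(\gban{G})$, and precomposing with the equivalence $\scrC_{r}(\gban{G})^{\opp}\cong\scrC_{\ell}(\gban{G}^{\opp})$ furnished by the duality of $t$-structures (recalled in the introduction; cf.\ \cite[1.3.2~(iii)]{MR751966}) produces the desired exact contravariant functor $({-})^{\ast}\colon\scrC_{r}(\gban{G})\to\scrC_{\ell}(\gban{G})$. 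Unwinding the equivalences on the explicit models --- or citing \cite[Theorem~2.2.3~(ii), Remark~2.2.8]{mythesis} --- shows that on objects it is $(e\colon A\to B)\mapsto(e^{\ast}\colon B^{\ast}\to A^{\ast})$, using once more that the dual of an admissible epic is an admissible monic.

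For the natural isomorphism, note that since $({-})^{\ast}$ is exact it extends by termwise application to a triangulated functor $\D(\gban{G})^{\opp}\cong\D(\gban{G}^{\opp})\to\D(\gban{G})$, sending a complex $C^{\bullet}$ with differentials $d^{n}$ to the complex with $(C^{-n})^{\ast}$ in degree $n$ and differential $(d^{-n-1})^{\ast}$; in particular $(\Sigma C^{\bullet})^{\ast}\cong\Sigma^{-1}(C^{\bullet})^{\ast}$. Reading off Definition~\ref{def:truncation-functors} term by term and applying the two identities above --- for instance $\tau_{r}^{\geq 0}$ replaces the degree-$0$ entry by $\Coker d^{-1}$, whose dual $\Ker((d^{-1})^{\ast})$ is exactly what $\tau_{\ell}^{\leq 0}$ puts in degree $0$ of $(C^{\bullet})^{\ast}$ --- I would establish the natural isomorphisms
\[
(\tau_{r}^{\geq 0}C^{\bullet})^{\ast}\cong\tau_{\ell}^{\leq 0}\bigl((C^{\bullet})^{\ast}\bigr)\qquad\text{and}\qquad(\tau_{r}^{\leq 0}C^{\bullet})^{\ast}\cong\tau_{\ell}^{\geq 0}\bigl((C^{\bullet})^{\ast}\bigr)
\]
of functors $\D(\gban{G})^{\opp}\to\D(\gban{G})$. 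Composing these, invoking $H^{0}_{\ell}=\tau^{\leq 0}_{\ell}\tau^{\geq 0}_{\ell}$ and $H^{0}_{r}=\tau_{r}^{\leq 0}\tau_{r}^{\geq 0}$ and the standard commutation $\tau^{\leq 0}\tau^{\geq 0}\cong\tau^{\geq 0}\tau^{\leq 0}$, yields a natural isomorphism $H^{0}_{\ell}((C^{\bullet})^{\ast})\cong(H^{0}_{r}(C^{\bullet}))^{\ast}$; replacing $C^{\bullet}$ by $\Sigma^{-n}C^{\bullet}$ and using $(\Sigma C^{\bullet})^{\ast}\cong\Sigma^{-1}(C^{\bullet})^{\ast}$ upgrades this to $H^{n}_{\ell}((C^{\bullet})^{\ast})\cong(H^{-n}_{r}(C^{\bullet}))^{\ast}$ for all $n$. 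A last check identifies the dual appearing here with the object-level dual of the preceding paragraph, so the two meanings of $({-})^{\ast}$ in the statement agree; naturality is automatic since every isomorphism used is natural.

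The main obstacle is bookkeeping rather than mathematics: one must keep track of three simultaneous reversals --- contravariance, the reindexing $n\mapsto -n$, and the twist against the shift $\Sigma$ --- carefully enough to be sure that the abstractly defined functor $\scrC_{r}(\gban{G})\to\scrC_{\ell}(\gban{G})$ of the second paragraph really is the termwise dual used in the truncation comparison. Once those two descriptions are reconciled, the duality of the canonical $t$-structures and Hahn--Banach do the rest.
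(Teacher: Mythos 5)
Your proposal is correct and follows essentially the same route as the paper: the functor is constructed by factoring $\iota_{\ell}\circ({-})^{\ast}$ through $\scrC_{\ell}(\gban{G}^{\opp})\cong\scrC_{r}(\gban{G})^{\opp}$ via the universal property of Proposition~\ref{prop:inclusion-functor}, and the cohomological comparison rests on the same two identities $(\Coker d)^{\ast}\cong\Ker(d^{\ast})$ and $(\Im d)^{\ast}\cong\Coim(d^{\ast})$ from Hahn--Banach. Your explicit term-by-term comparison of the truncation functors is merely an unpacking of the paper's one-line appeal to those identities.
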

\begin{proof}
  First, the duality functor $\scrC_{r}{(\gban{G})} \to
  \scrC_{\ell}{(\gban{G})}$ is well-defined since 
  the duality functor on $\gban{G}$
  \begin{enumerate}[(i)]
    \item
      maps epics (morphisms with dense range) to monics
      (injective morphisms) by \cite[4.12, Corollaries (b), p.99]{MR1157815},
    \item
      preserves the homotopy equivalence relation since it is additive,
    \item
      preserves bicartesian squares because it is exact.
  \end{enumerate}
  Let us prove that the duality functor $\scrC_{r}{(\gban{G})} \to
  \scrC_{\ell}{(\gban{G})}$ is exact. Points (i) and (iii) yield
  that the duality functor
  $({-})^{\ast}: \gban{G}^{\opp} \to \gban{G}$
  is exact and preserves monics. The same holds true for $\iota_{\ell}:
  \gban{G} \to \scrC_{\ell}{(\gban{G})}$, hence also for the
  composition $F = \iota_{\ell} \circ ({-})^{\ast}$. 
  By \cite[2.2.3, p.37]{mythesis} the universal property of the
  inclusion functor $\iota_{\ell}:\gban{G}^{\opp} \to
  \scrC_{\ell}{(\gban{G}^{\opp})}$ yields a unique exact prolongation 
  $\widetilde{F}:\scrC_{\ell}{(\gban{G}^{\opp})} \to
  \scrC_{\ell}{(\gban{G})}$.
  The construction of $\widetilde{F}$ given in 
  \cite[p.40]{mythesis} together with \cite[2.2.8, p.39]{mythesis}
  yield that
  \[
  \widetilde{F}{(f: A \to B)} = (f^{\ast}: B^{\ast} \to  A^{\ast}).
  \]
  so that $\widetilde{F}$ coincides with the above description of the
  duality functor under the equivalence
  $\scrC_{r}{(\gban{G})}^{\opp} \cong
  \scrC_{\ell}{(\gban{G}^{\opp})}$.

  In order to see that there is a natural isomorphism
  $H^{n}_{\ell}{(({-})^{\ast})} \cong
  \left(H^{-n}_{r}{({-})}\right)^{\ast}$,
  it suffices to notice that for a morphism $f$ of $\gban{G}$ there
  are natural isomorphisms
  \[
  (\Coker{f})^{\ast} \cong \Ker{(f^{\ast})}
  \qquad \text{and} \qquad
  (\Im{f})^{\ast} \cong \Coim{(f^{\ast})},
  \]
  which is a straightforward consequence
  of~\cite[4.12, Theorem, p.99]{MR1157815}.
\end{proof}

\begin{Rem}
  \label{rem:duality}
  The dual of a monic in $\gban{G}$ is not in general an epic, the
  range is weak$^{\ast}$-dense 
  by~\cite[4.12, Corollaries, (c), p.99]{MR1157815} but not necessarily
  norm-dense: consider for instance the inclusion 
  $\ell^{1} \hookrightarrow c_{0}$ whose dual is the inclusion
  $\ell^{1} \hookrightarrow \ell^{\infty}$ the range of which is
  clearly not norm-dense. It follows in particular that there is no
  duality functor $\scrC_{\ell}{(\gban{G})} \to \scrC_{r}{(\gban{G})}$
  as constructed above.
  
  In a similar vein, $(\Coim{f})^{\ast}$ does
  not in general coincide with $\Im{(f^{\ast})}$ but rather with its
  weak${}^{\ast}$-closure and $(\Ker{f})^{\ast}$ is isomorphic to
  the codomain modulo the weak${}^{\ast}$-closure of
  $\Im{(f^{\ast})}$, hence it may be distinct from
  $\Coker{(f^{\ast})}$.
\end{Rem}

Recall the main properties of the duality functor on $\gban{G}$:

\begin{Prop}[{\cite[p.65]{mythesis}}]
  The duality functor $({-})^{\ast}:\gban{G} \to \gban{G}$ is exact,
  reflects exactness and sends projective objects to injective
  objects. Moreover, there is a natural isomorphism
  $({-})^{\ast} \circ ({-})_{G} \cong ({-})^{G}
  \circ ({-})^{\ast}$. \qed
\end{Prop}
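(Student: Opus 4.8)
The plan is to reduce every assertion to the corresponding fact about Banach spaces. The forgetful functor $\downarrow: \gban{G} \to \Ban$ of Theorem~\ref{thm:fundamental-adjunctions} is exact, faithful and reflects exact sequences (kernels and cokernels in $\gban{G}$ are computed underneath with the induced action), and $\downarrow \circ ({-})^{\ast}$ and $({-})^{\ast} \circ \downarrow$ agree on the nose; hence ``$({-})^{\ast}$ is exact'' and ``$({-})^{\ast}$ reflects exactness'' on $\gban{G}$ both follow once they are established on $\Ban$. On $\Ban$ exactness is Hahn--Banach: the dual of a closed-subspace inclusion is onto and the dual of a quotient map is an isometric embedding onto a (weak$^{\ast}$-closed) subspace. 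For reflection of exactness on $\Ban$ I would apply $({-})^{\ast}$ twice: $({-})^{\ast\ast}$ is then exact, the canonical map $\eta_{E}: E \to E^{\ast\ast}$ is a natural admissible monic, so a complex with short exact dual has short exact double dual, and a short chase with the $\eta$'s --- using the exact-category fact that a composite $vu$ admissible monic forces $u$ to be one, together with the fact that $({-})^{\ast}$ is faithful and conservative by elementary Banach space theory --- shows the original complex is short exact.

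For ``$({-})^{\ast}$ sends projectives to injectives'' I would sidestep any description of the projectives of $\gban{G}$ and use the natural symmetry
\[
\Hom_{\gban{G}}{(A, B^{\ast})} \cong \Hom_{\gban{G}}{(B, A^{\ast})},
\]
both sides being the space of bounded $G$-invariant bilinear forms on $A \times B$. Given a projective $P$ and an admissible monic $u: A \mono B$, exactness of $({-})^{\ast}$ makes $u^{\ast}: B^{\ast} \to A^{\ast}$ an admissible epic, and under the symmetry the restriction $\Hom_{\gban{G}}{(B, P^{\ast})} \to \Hom_{\gban{G}}{(A, P^{\ast})}$ corresponds to $(u^{\ast})_{\ast}: \Hom_{\gban{G}}{(P, B^{\ast})} \to \Hom_{\gban{G}}{(P, A^{\ast})}$, which is surjective because $P$ is projective. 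Thus $\Hom_{\gban{G}}{({-}, P^{\ast})}$ carries admissible monics to surjections, i.e.\ $P^{\ast}$ is injective.

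For the natural isomorphism $({-})^{\ast} \circ ({-})_{G} \cong ({-})^{G} \circ ({-})^{\ast}$ I would argue directly: with $N = \clspan\{m - gm \,:\, m \in M, \, g \in G\}$ one has $M_{G} = M/N$, so $(M_{G})^{\ast}$ is the annihilator $N^{\perp} \subseteq M^{\ast}$, and a functional $\varphi$ annihilates $N$ exactly when $\varphi(m) = \varphi(gm)$ for all $m$ and $g$, i.e.\ exactly when $\varphi$ is fixed by the contragredient $G$-action; hence $(M_{G})^{\ast} = (M^{\ast})^{G}$ isometrically, visibly naturally in $M$. (Alternatively this is forced by Yoneda together with the adjunctions $({-})_{G} \dashv \leftidx_{\varepsilon}{({-})} \dashv ({-})^{G}$ of Theorem~\ref{thm:fundamental-adjunctions} and the obvious identification of the dual of a trivial module with the trivial module on the dual.)

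The only genuinely analytic ingredients are the behaviour of the Banach duality functor on the exact structure (Hahn--Banach, in the form ``dual of a subobject inclusion is onto'' and ``dual of a quotient is an isometric embedding'') and the fact that $E \to E^{\ast\ast}$ is isometric; everything else is formal manipulation with adjunctions and the exact structure of $\gban{G}$. Accordingly I expect no serious obstacle, the fussiest point being a clean bookkeeping of the reflection-of-exactness step.
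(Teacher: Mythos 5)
The paper gives no proof of this proposition at all: it is quoted from the thesis and stamped with a qed, so there is nothing to compare against line by line. Judged on its own terms, your argument is essentially correct and its architecture is the natural one: reduce everything to $\Ban$ through the forgetful functor (which commutes with $({-})^{\ast}$ and detects exactness, having both adjoints), use Hahn--Banach for exactness of the duality on $\Ban$, use the symmetry $\Hom_{\gban{G}}{(A,B^{\ast})} \cong \Hom_{\gban{G}}{(B,A^{\ast})}$ (both sides being $G$-invariant bounded bilinear forms) for the projective-to-injective claim, and identify $(M_{G})^{\ast}$ with the annihilator of $\clspan\{m-gm\}$, i.e.\ with $(M^{\ast})^{G}$, for the last claim. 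The second and fourth of these are complete as written; the projectives-to-injectives argument in particular is clean and avoids any description of the projectives of $\gban{G}$.

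The one step that does not work as literally described is the epic half of reflection of exactness. Your ``chase with the $\eta$'s'' via the obscure axiom does handle the monic end: $f^{\ast\ast}\eta_{A}=\eta_{B}f$ is an admissible monic, hence so is $f$. But the same move fails for $g$: knowing $g^{\ast\ast}$ is an admissible epic and $\eta_{C}g=g^{\ast\ast}\eta_{B}$ gives nothing, since $\eta_{B}$ is not surjective and there is no dual obscure axiom extracting admissible epics from such a composite. The repair is the ingredient you mention only in passing, conservativity: once $f$ is known to be an admissible monic, form $\bar{g}:\Coker{f}\to C$; its dual identifies with $g^{\ast}:C^{\ast}\to\Ker{(f^{\ast})}$, which is an isomorphism by exactness of the dual sequence, so $\bar{g}$ is an isomorphism and $g$ is the cokernel of $f$. (Equivalently, invoke the closed range theorem: $g^{\ast}$ injective with closed range forces $g$ surjective.) With that step made explicit the proof is complete.
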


\begin{Thm}
  \label{thm:duality}
  The duality functor
  $({-})^{\ast}: \scrC_{r}{(\Ban)} \to \scrC_{\ell}{(\Ban)}$ yields a
  natural isomorphism
  \[
  \left( \scrH_{n}^{\ell^{1}}{(G, M)} \right)^{\ast}
  \cong \scrH_{b}^{n}{(G, M^{\ast})}.
  \]
\end{Thm}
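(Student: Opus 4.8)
The plan is to reduce the statement to Proposition~\ref{prop:duality} by identifying, in the derived category, the dual of the complex computing $\ell^{1}$-homology of $M$ with the complex computing bounded cohomology of $M^{\ast}$.

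First I would unwind the definitions. Since $\scrH_{n}^{\ell^{1}}{(G,M)} = H_{r}^{-n}{(\bfL^{-}{({-})_{G}}(M))}$ and $\scrH_{b}^{n}{(G,M^{\ast})} = H_{\ell}^{n}{(\bfR^{+}{({-})^{G}}(M^{\ast}))}$, and since Proposition~\ref{prop:duality} applied to the trivial group --- for which $\gban{G}$ is $\Ban$ --- supplies the natural isomorphism $H_{\ell}^{n}{(({-})^{\ast})} \cong (H_{r}^{-n}{({-})})^{\ast}$ of functors on $\D{(\Ban)}$, the whole theorem comes down to producing a natural isomorphism in $\D^{+}{(\Ban)}$
\[
\bfR^{+}{({-})^{G}}(M^{\ast}) \cong \bigl(\bfL^{-}{({-})_{G}}(M)\bigr)^{\ast}.
\]

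To build this isomorphism I would choose a projective resolution $P_{\bullet} \to M$ in $\gban{G}$, so that $(P_{\bullet})_{G}$ represents $\bfL^{-}{({-})_{G}}(M)$. By the properties of the duality functor on $\gban{G}$ recorded in \cite[p.65]{mythesis} --- it is exact, so it carries the quasi-isomorphism $P_{\bullet} \to M$ to a quasi-isomorphism $M^{\ast} \to P_{\bullet}^{\ast}$, and it sends projectives to injectives --- the complex $P_{\bullet}^{\ast}$ is an injective resolution of $M^{\ast}$, whence $(P_{\bullet}^{\ast})^{G}$ represents $\bfR^{+}{({-})^{G}}(M^{\ast})$. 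Then the natural isomorphism $({-})^{\ast} \circ ({-})_{G} \cong ({-})^{G} \circ ({-})^{\ast}$ from the same reference, applied in each degree, gives an isomorphism of complexes $\bigl((P_{\bullet})_{G}\bigr)^{\ast} \cong (P_{\bullet}^{\ast})^{G}$, and hence the desired isomorphism in $\D^{+}{(\Ban)}$. Assembling the pieces,
\[
\scrH_{b}^{n}{(G,M^{\ast})} = H_{\ell}^{n}{\bigl(\bfR^{+}{({-})^{G}}(M^{\ast})\bigr)} \cong H_{\ell}^{n}{\Bigl(\bigl(\bfL^{-}{({-})_{G}}(M)\bigr)^{\ast}\Bigr)} \cong \left(H_{r}^{-n}{\bigl(\bfL^{-}{({-})_{G}}(M)\bigr)}\right)^{\ast} = \left(\scrH_{n}^{\ell^{1}}{(G,M)}\right)^{\ast},
\]
and naturality in $M$ is inherited from the three ingredients used (the identification of the derived functors with resolution complexes, the natural isomorphism $({-})^{\ast}({-})_{G} \cong ({-})^{G}({-})^{\ast}$, and Proposition~\ref{prop:duality}), so the composite is a natural isomorphism between the contravariant functors $\scrH_{n}^{\ell^{1}}{(G,{-})}^{\ast}$ and $\scrH_{b}^{n}{(G,({-})^{\ast})}$ from $\gban{G}$ to $\scrC_{\ell}{(\Ban)}$.

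The hard part is really just bookkeeping rather than mathematics: because the duality functor is contravariant it interchanges $\D^{-}$ with $\D^{+}$ and the right $t$-structure with the left one, and one has to track this carefully to see that it is the \emph{left} heart that governs bounded cohomology on the right-hand side of the isomorphism. This is exactly the content packaged into Proposition~\ref{prop:duality}, and once that proposition is in hand there is no further obstacle.
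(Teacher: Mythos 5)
Your proposal is correct and follows essentially the same route as the paper: choose a projective resolution $P_{\bullet}\epi M$, use the exactness of duality and the fact that it sends projectives to injectives to see that $P_{\bullet}^{\ast}$ is an injective resolution of $M^{\ast}$, commute duality past (co)invariants via $({-})^{\ast}\circ({-})_{G}\cong({-})^{G}\circ({-})^{\ast}$, and finish with the isomorphism $H_{\ell}^{n}(({-})^{\ast})\cong(H_{r}^{-n}({-}))^{\ast}$ from Proposition~\ref{prop:duality}. The only cosmetic difference is that you package the middle steps as an isomorphism $\bfR^{+}{({-})^{G}}(M^{\ast})\cong(\bfL^{-}{({-})_{G}}(M))^{\ast}$ in the derived category before taking cohomology, which the paper does degreewise on the chosen resolution.
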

\begin{proof}
  To compute $\scrH_{n}^{\ell^{1}}{(G,M)}$ choose a projective
  resolution $P_{\bullet} \epi M$, apply the coinvariants
  $({-})_{G}$ to $P_{\bullet}$ and then the right cohomology functor
  $H_{r}^{-n}$ to the resulting complex. Now the two previous
  propositions give natural isomorphisms
  \[
  (H_{r}^{-n}((P_{\bullet})_G))^{\ast} \cong
  H_{\ell}^{n}{(((P_{\bullet})_{G})^{\ast})} \cong
  H_{\ell}^{n}{(((P_{\bullet})^{\ast})^{G})}
  \]
  and it remains to notice that $M^{\ast} \mono (P_{\bullet})^{\ast}$ is
  an injective resolution of $M^{\ast}$, so that the right hand side 
  computes bounded cohomology in degree~$n$.
\end{proof}

\if{0}
\begin{Prop}
  Consider two composeable morphisms $f,g$ in $\Ban$ such that
  $gf = 0$. In the diagram
  \[
  \xymatrix{
    & \Coim{f} \ar@{ >.>}[rr]^{\varphi} \ar@{ >.>}[dr] & & \Ker{g} \ar@{
      >->}[dl] \\
    A' \ar@{->>}[ur] \ar[rr]^{f} & & A \ar@{->>}[dl] \ar@{.>>}[dr]
    \ar[rr]^{g} & & A'' \\
    & \Coker{f} \ar@{.>>}[rr]^{\psi} & & \Im{g} \ar@{ >->}[ur]
  }
  \]
  we have that
  \[
  \Coker{\varphi} \cong \Coim{u} \cong \Im{u} \cong \Ker{\psi}
  \]
  where $u$ is the composition $\Ker{g} \mono A \epi \Coker{f}$.
\end{Prop}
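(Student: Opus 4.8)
The plan is to deduce everything from Remarks~\ref{rem:huebi} and~\ref{rem:huebi-2}, the only extra ingredient being the elementary observation that in $\Ban$ the canonical morphism $c_{h}\colon\Coim{h}\to\Im{h}$ attached to a morphism $h\colon A'\to A$ is simultaneously a categorical monic and a categorical epic: it is the dense-range injection $A'/\Ker{h}\to\overline{h(A')}$, which is as far from an isomorphism as one likes, and this is exactly the phenomenon that keeps $\Ban$ outside the abelian world. (Alternatively one quotes Remark~\ref{rem:huebi-2} verbatim, since $\Ban$ has enough projectives and enough injectives.) The first step is to pin down the two dotted arrows as composites. Because $f$ factors through its coimage and image and $gf=0$, the monic $\Im{f}\mono A$ factors through $\Ker{g}\mono A$ via the admissible monic $v\colon\Im{f}\to\Ker{g}$ of Remark~\ref{rem:huebi}; comparing with the defining property of $\varphi$ and cancelling the epic $A'\epi\Coim{f}$ on the left and the monic $\Ker{g}\mono A$ on the right shows $\varphi=v\circ c_{f}$. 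Dually, since $\overline{f(A')}=\Im{f}\subseteq\Ker{g}$ there is an induced epic $w\colon\Coker{f}\epi\Coim{g}$, and the same cancellation argument gives $\psi=c_{g}\circ w$.

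The second step is formal. Since $c_{f}$ is epic, cancelling it yields $\Coker{\varphi}=\Coker{v}$; since $c_{g}$ is monic, cancelling it yields $\Ker{\psi}=\Ker{w}$. By Remark~\ref{rem:huebi} the object $X:=\Coker{v}$ satisfies $X\cong\Coim{u}\cong\Im{u}$, and by Remark~\ref{rem:huebi-2} this same $X$ is also $\Ker{(\Coker{f}\epi\Coim{g})}=\Ker{w}$. Chaining these identifications gives $\Coker{\varphi}\cong\Coim{u}\cong\Im{u}\cong\Ker{\psi}$, which is the claim; naturality in the pair $(f,g)$ costs nothing, since every arrow above was produced by a universal property.

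As a sanity check I would also confirm the conclusion by a direct computation in $\Ban$: writing $\Coim{f}=A'/\Ker{f}$, $\Im{f}=\overline{f(A')}$, $\Coker{f}=A/\overline{f(A')}$, $\Coim{g}=A/\Ker{g}$, one checks that $\overline{f(A')}$ is closed in $\Ker{g}$ and that $\Ker{g}/\overline{f(A')}$ is a closed subspace of $\Coker{f}$, and then that each of $\Coker{\varphi}$, $\Coim{u}$, $\Im{u}$ and $\Ker{\psi}$ is canonically isometric to $\Ker{g}/\overline{f(A')}$. I do not anticipate a genuine obstacle. The one point requiring care---and the whole reason the statement is not a triviality---is that, as the Example following Remark~\ref{rem:huebi-2} shows, $\varphi$ and $\psi$ need not be kernels or cokernels, so one cannot simply import the abelian-category identity $\Coker{\varphi}\cong\Ker{\psi}$; the monic/epic cancellation against $c_{f}$ and $c_{g}$, legitimate precisely because of the special structure of $\Ban$, is what makes it go through.
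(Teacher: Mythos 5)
Your argument is correct, but it takes a genuinely different route from the paper's. The paper proves the Proposition by a direct, element-wise computation in $\Ban$: since $gf=0$ one has $\Im{f}\subseteq\Ker{g}$, so a class $b+\Im{f}\in\Coker{f}$ lies in $\Ker{\psi}$ if and only if $b\in\Ker{g}+\Im{f}=\Ker{g}$; hence $\Ker{\psi}$ is precisely the set-theoretic image of $u$, which is therefore closed, giving $\Ker{\psi}\cong\Im{u}\cong\Coim{u}$; and since $\Ker{u}$ is the closure of the set-theoretic image of $f$, i.e.\ $\Im{\varphi}$, one gets $\Coim{u}\cong\Coker{\varphi}$. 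You instead factor $\varphi=v\circ c_{f}$ and $\psi=c_{g}\circ w$, cancel the categorical epic $c_{f}$ against the cokernel and the categorical monic $c_{g}$ against the kernel, and then quote Remarks~\ref{rem:huebi} and~\ref{rem:huebi-2} for the identifications $\Coker{v}\cong\Coim{u}\cong\Im{u}\cong\Ker{w}$ --- in effect observing that the Proposition is nothing but the specialization of those two remarks to $\Ban$, where $\Coim{h}\to\Im{h}$ is always a dense-range injection. Both arguments are sound, and the factorizations $\varphi=v\circ c_{f}$, $\psi=c_{g}\circ w$ are justified exactly as you say, by cancelling the epic $A'\epi\Coim{f}$ and the monic $\Ker{g}\mono A$ against the defining universal properties. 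What your approach buys is generality and structure: it works verbatim in any quasi-abelian category with enough projectives and injectives (or more generally whenever $\Coim{h}\to\Im{h}$ is monic and epic), at the price of relying on Quillen's obscure axiom and the push-out argument of Remark~\ref{rem:huebi}. The paper's computation buys self-containedness: it verifies the claim in $\Ban$ from scratch, which is presumably its purpose as an independent check of the abstract machinery; your concluding ``sanity check'' is, in substance, exactly the paper's proof.
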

\begin{proof}
  Since $gf = 0$ we have $\Im{f} \subset \Ker{g}$ and
  thus $\Ker{g} = \Ker{g} + \Im{f}$ in $A$. Clearly,
  \[
  b + \Im{f} \in \Ker{\psi} \quad \Longleftrightarrow \quad
  b \in \Ker{g} + \Im{f} = \Ker{g},
  \]
  so $\Ker{\psi}$ coincides with the set-theoretic image of $u$ which
  is therefore closed and hence
  \[
  \Ker{\psi} \cong \Im{u} \cong \Coim{u}.
  \]
  Moreover, $a \in \Ker{u}$ if and only if $a$ is in the closure of
  the set-theoretic image of $f$, or in other words $a \in
  \Im{\varphi}$. Thus,
  \[
  \Coim{u} \cong \Coker{\varphi}
  \]
  as claimed.
\end{proof}
\fi
%\adjustmtc

\section{Canonical Resolutions}
\label{sec:canonical-resolutions}

Using the canonical resolution associated to the induction
comonad we give a relatively elementary proof
of the existence of the $\ell^1$-homology functors as described in
Theorem~\ref{thm:char-l1-homology}. In the next section we will make
use of this construction in order to relate our theory to the classical
one.

Recall the fundamental adjunction of induction 
${\uparrow} = \ell^{1}{(G)} \prtens {-} : \Ban \to \gban{G}$ to
the forgetful functor $\downarrow: \gban{G} \to \Ban$, see
Theorem~\ref{thm:fundamental-adjunctions}. The latter functor
is obviously exact while the former is exact since $\ell^{1}{(G)}$ is
projective and hence flat as a Banach
space. Every adjoint pair of functors gives rise to a \emph{comonad}
and a \emph{monad,} see~\cite[8.6, 8.7]{MR1269324}, as follows:

Let $L: \scrA \leftrightarrow \scrB : R$ be an adjoint pair and let
$\varepsilon : LR \Rightarrow \id_{\scrB}$ and $\eta: \id_{\scrA}
\Rightarrow RL$ be the adjunction morphisms. Write $\bot = LR$ and
$\top = RL$, as well as  $\delta_{B} = L(\eta_{RB})$ and
$\mu_{A} =  R(\varepsilon_{LA})$, it is then a simple fact that
$(\bot, \varepsilon, \mu)$ is a comonad and $(\top, \eta, \delta)$ is a
monad, see~\cite[8.6.2]{MR1269324}.
The simplicial object associated to the comonad $\bot$
is described in~\cite[8.6.4]{MR1269324}, it gives rise to a simplicial
resolution $\bot_{\ast}B \to B$, where $\bot_{n}B := (\bot)^{n+1}B$.

Suppose $\scrA$ and $\scrB$ are additive. By taking the alternating sum of
the face maps one obtains a complex which we still denote by
$\bot_{\ast}B$, and it yields the \emph{canonical resolution}
$\bot_{\ast} B \to B$. This parlance is justified since it is
well-known and easy to check \cite[8.6.8, 8.6.10]{MR1269324} 
that $R(\bot_{\ast}B) \to R(B)$ as well as
$\bot_{\ast}L(A) \to L(A)$ are chain homotopy equivalences for all
$B \in \scrB$ and all $A \in \scrA$.

We apply this to the \emph{induction comonad} $\bot \, = \,\uparrow
\downarrow$ and obtain in particular for each $M \in \gban{G}$ the
\emph{canonical resolution}
\[
\bot_{\ast} M \to M,
\]
which has the property that for all $M \in \gban{G}$ and all
$E \in \Ban$ the complexes
\[
\downarrow ( \cdots \to \bot_{1} M \to \bot_{0} M \to M ) \qquad
\text{and} \qquad
\cdots \to \bot_{1} {\uparrow}E \to \bot_{0} {\uparrow} E \to
{\uparrow} E
\]
are split exact in $\Ban$ and $\gban{G}$, respectively.

Since $\bot$ is exact, we obtain for each short exact sequence 
$M' \mono M \epi M''$ a short exact sequence of complexes
\[
\bot_{\ast} M' \mono \bot_{\ast} M \epi \bot_{\ast} M''
\]
in $\Ch^{\leq 0}{(\gban{G})}$. Writing temporarily
$\bot_{-1} = \id_{\gban{G}}$ we have for all $n \geq 0$
\[
(\bot_{n} M)_{G} \cong 
\ell^{1}{(G)} \prtens_{G} {}_\varepsilon({\downarrow}\bot_{n-1} M) \cong
{\downarrow}\bot_{n-1} M,
\]
so we get a short exact sequence of complexes in $\Ch^{\leq 0}{(\Ban)}$
\[
(\bot_{\ast} M')_{G} \mono (\bot_{\ast} M)_{G} \epi
(\bot_{\ast} M'')_{G}.
\]

Since the inclusion functor $\iota_{r}: \Ban \to \scrC_{r}{(\Ban)}$ is
exact, the snake lemma provides us with a long exact sequence
\[
\cdots \to 
H^{n}_{r} (\iota_{r}(\bot_{\ast} M')_{G}) \to
H^{n}_{r} (\iota_{r}(\bot_{\ast} M)_{G}) \to
H^{n}_{r} (\iota_{r}(\bot_{\ast} M'')_{G}) \to 
H^{n+1}_{r}(\cdots) \to 
\cdots
\]
which is obviously natural in the short exact sequence
$M' \mono M \epi M''$ so that we have constructed a $\delta$-functor.

Because the complexes involved are concentrated in non-positive degrees
and because $\iota_{r}$ and $({-})_{G}$ are left adjoints and hence 
commute with taking cokernels, we have that
\begin{align*}
  H_{r}^{0}{(\iota_{r}((\bot_{\ast} M)_{G}))} & = 
  \Coker{(\iota_{r} ((\bot_{1} M \to \bot_{0} M)_G))} \\
  &
  \cong \iota_{r} \circ  ({-})_{G} \circ \Coker{(\bot_{1} M \to \bot_{0} M)} \\
  &\cong (M_{G} \to 0).
\end{align*}
For each Banach space $E$ the sequence
\[
\cdots \to \bot_{1} {\uparrow}E \to \bot_{0} {\uparrow} E \to
{\uparrow} E
\]
is split exact, so the map
\[
(\bot_{\ast} {\uparrow}E)_{G} \to ({\uparrow}E)_G \cong E
\]
is a quasi-isomorphism and hence the cohomology of the complex
$\iota_{r}(\bot_{\ast} {\uparrow}E)_{G}$ vanishes outside degree
zero. Finally, the morphism ${\downarrow}\varepsilon_{M} :
{\downarrow}\bot M \to {\downarrow}M$ is a split epic for each
$M \in \gban{G}$, hence $\bot M \to M$ is an admissible epic and it follows
that every projective $P \in \gban{G}$ is a direct summand of
$\bot P = {\uparrow} {\downarrow} P$. Consequently, 
our $\delta$-functor vanishes on
projectives outside degree zero and we conclude from
Theorem~\ref{thm:char-l1-homology} that:

\begin{Thm}
  There is a canonical isomorphism 
  $\scrH^{\ell^{1}}_{\ast}{(G,{-})} \cong 
  H^{-\ast}_{r}{(\iota_{r}(\bot_{\ast}({-}))_G)}$.
\end{Thm}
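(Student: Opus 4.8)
The plan is to invoke the axiomatic characterisation of Theorem~\ref{thm:char-l1-homology}: it suffices to verify that the family of functors $M \mapsto H^{-n}_{r}(\iota_{r}(\bot_{\ast}M)_{G})$ satisfies the three listed properties (normalization, vanishing on projectives, long exact sequence), since then the uniqueness clause of that theorem produces the asserted canonical isomorphism. So the argument is really a checklist, and the bulk of the work has in fact already been assembled in the discussion preceding the statement.

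First I would record the long-exact-sequence property. Because $\bot = {\uparrow}{\downarrow}$ is exact, a short exact sequence $M' \mono M \epi M''$ in $\gban{G}$ yields a short exact sequence of complexes $\bot_{\ast}M' \mono \bot_{\ast}M \epi \bot_{\ast}M''$; applying the coinvariants $({-})_{G}$ (which, using $(\bot_{n}M)_{G} \cong {\downarrow}\bot_{n-1}M$ for $n \geq 0$, again gives a short exact sequence of complexes in $\Ch^{\leq 0}(\Ban)$) and then the exact functor $\iota_{r}$, the snake lemma delivers the desired long exact sequence in $\scrC_{r}(\Ban)$, naturally in the short exact sequence. This is exactly the computation carried out above, so only a sentence of recollection is needed.

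Next comes normalization: since $\iota_{r}$ and $({-})_{G}$ are left adjoints they commute with cokernels, and the complexes are concentrated in non-positive degrees, so $H^{0}_{r}(\iota_{r}((\bot_{\ast}M)_{G})) \cong \iota_{r}(\Coker(\bot_{1}M \to \bot_{0}M)_{G}) \cong (M_{G} \to 0)$, as computed above. For the vanishing property I would argue that every projective $P \in \gban{G}$ is a direct summand of $\bot P = {\uparrow}{\downarrow}P$ (because ${\downarrow}\varepsilon_{P}$ is a split epic, so $\bot P \to P$ is an admissible epic onto a projective), hence $\bot_{\ast}P \to P$ is chain-homotopy equivalent to a direct summand of $\bot_{\ast}{\uparrow}({\downarrow}P) \to {\uparrow}({\downarrow}P)$; the latter is split exact in $\gban{G}$, so after applying $({-})_{G}$ the complex $(\bot_{\ast}{\uparrow}E)_{G} \to E$ is a quasi-isomorphism for every Banach space $E$, and therefore the cohomology of $\iota_{r}(\bot_{\ast}P)_{G}$ vanishes outside degree zero. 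Retract of a zero object is zero, so the vanishing property holds.

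The main obstacle, such as it is, is purely bookkeeping: one must be careful that the chain-homotopy equivalences $\bot_{\ast}{\uparrow}E \to {\uparrow}E$ and $R(\bot_{\ast}B) \to R(B)$ quoted from \cite[8.6.8, 8.6.10]{MR1269324} survive the functors $({-})_{G}$ and $\iota_{r}$ — which they do, since chain homotopy equivalences are preserved by any additive functor — and that passing to a direct summand is compatible with all of this, which is automatic. There is no genuine analytic or homological difficulty here; having verified (i)–(iii), the conclusion $\scrH^{\ell^{1}}_{\ast}(G,{-}) \cong H^{-\ast}_{r}(\iota_{r}(\bot_{\ast}({-}))_{G})$ follows immediately from the uniqueness part of Theorem~\ref{thm:char-l1-homology}.
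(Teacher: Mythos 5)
Your proposal is correct and follows essentially the same route as the paper: the whole of the section preceding the statement is precisely the verification of the normalization, vanishing and long-exact-sequence axioms for $M \mapsto H^{-n}_{r}(\iota_{r}(\bot_{\ast}M)_{G})$, and the theorem is then deduced from the uniqueness clause of Theorem~\ref{thm:char-l1-homology} exactly as you say. The only cosmetic remark is that $\bot_{\ast}P$ is literally a direct summand of $\bot_{\ast}{\uparrow}{\downarrow}P$ (apply the functor $\bot_{\ast}$ to the retract diagram), so the phrase ``chain-homotopy equivalent to a direct summand'' can be simplified; the argument is unaffected.
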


\begin{Rem}
  The complex $\bot_{\ast}M$ is of course nothing but the
  bar resolution as given e.g. in \cite[(2.13),
  p.20]{toolongthesis}. Call a Banach $G$-module \emph{induced} if it
  is of the form ${\uparrow}E$ for some $E \in \Ban$. 
  By \cite[8.6.7, Exercise 8.6.3]{MR1269324} the direct summands of
  induced modules are precisely the $\bot$-projective objects, or,
  equivalently, the projective objects with respect to the
  exact structure $\scrE^{G}_{\rel}$ on $\gban{G}$ consisting of 
  short sequences $\sigma$ such that ${\downarrow}\sigma$ is
  split exact. This notion is closely related to 
  \emph{relative projectivity} as defined in
  \cite[(A.1), p.104]{toolongthesis} but it is somewhat less
  restrictive.
\end{Rem}

In particular we have shown:

\begin{Cor}
  Every $\bot$-projective object is
  $\scrH^{\ell^{1}}_{\ast}{(G,{-})}$-acyclic. \qed
\end{Cor}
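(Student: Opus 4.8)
The plan is to reduce the statement to the case of induced modules, where acyclicity was already verified in the course of proving the previous theorem. First I would recall, as observed in the remark above following \cite[8.6.7, Exercise~8.6.3]{MR1269324}, that the $\bot$-projective objects of $\gban{G}$ are precisely the direct summands of induced modules ${\uparrow}E$ with $E \in \Ban$. Hence it suffices to show that $\scrH^{\ell^{1}}_{n}{(G, {\uparrow}E)} = 0$ for all $n \neq 0$ and all $E \in \Ban$, and then to pass to retracts by an additivity argument.

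For the induced case I would invoke the identification $\scrH^{\ell^{1}}_{\ast}{(G,{-})} \cong H^{-\ast}_{r}{(\iota_{r}(\bot_{\ast}({-}))_{G})}$ just established. By construction of the canonical resolution, the augmented complex $\bot_{\ast}{\uparrow}E \to {\uparrow}E$ is split exact in $\gban{G}$; since $({-})_{G}$ and $\iota_{r}$ are additive, they preserve this homotopy equivalence, so $\iota_{r}((\bot_{\ast}{\uparrow}E)_{G}) \to \iota_{r}(({\uparrow}E)_{G}) \cong \iota_{r}(E)$ is a quasi-isomorphism onto a complex concentrated in degree zero. Therefore $H^{-n}_{r}{(\iota_{r}((\bot_{\ast}{\uparrow}E)_{G}))} = 0$ for every $n \neq 0$, which is exactly $\scrH^{\ell^{1}}_{n}{(G, {\uparrow}E)} = 0$. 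This is nothing more than the computation already carried out in the proof of the theorem; no new work is required here.

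Finally, let $P$ be an arbitrary $\bot$-projective object, written as a retract of some ${\uparrow}E$ via $i: P \to {\uparrow}E$ and $r: {\uparrow}E \to P$ with $ri = \id_{P}$. Each functor $\scrH^{\ell^{1}}_{n}{(G,{-})}$ is additive, being a composite of the additive functors $\bot_{\ast}$, $({-})_{G}$, $\iota_{r}$ and the homological functor $H^{-n}_{r}$; applying it to the identity $ri = \id_{P}$ exhibits $\scrH^{\ell^{1}}_{n}{(G,P)}$ as a retract of $\scrH^{\ell^{1}}_{n}{(G,{\uparrow}E)} = 0$ for $n \neq 0$, hence $\scrH^{\ell^{1}}_{n}{(G,P)} = 0$. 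There is no genuine obstacle in this argument; the only points requiring a little care are invoking additivity of the homological functors and keeping track of the degree conventions — vanishing in negative degrees is automatic because $\bot_{\ast}M$ is concentrated in non-positive cohomological degrees, so the content is vanishing for $n > 0$.
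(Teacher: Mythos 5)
Your argument is correct and is precisely the one the paper intends: the corollary is stated with ``In particular we have shown'' because the split exactness of $\bot_{\ast}{\uparrow}E \to {\uparrow}E$ already gave the vanishing of $H^{-n}_{r}(\iota_{r}(\bot_{\ast}{\uparrow}E)_{G})$ for $n \neq 0$ in the course of the construction, and the preceding remark supplies the identification of $\bot$-projectives with direct summands of induced modules. Your explicit retract-plus-additivity step is exactly what the paper leaves implicit.
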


\begin{Rem}
  The acyclicity of $\bot$-projective objects 
  implies by dimension-shifting that one may compute
  $\ell^{1}$-cohomology with coefficients in $M$ using any resolution
  $P_{\bullet} \epi M$ with $\bot$-projective
  components. Requiring that $\epi$ is more than just a
  quasi-isomorphism (e.g., a \emph{strong resolution})
  is only necessary if one is concerned with ensuring that the
  resolution can be used to compute the canonical semi-norms.
\end{Rem}

\begin{Rem}
  The construction given here shows in particular that
  $\ell^{1}$-homology is the
  \emph{derived functor of the induction comonad} with
  \emph{coefficient functor $\iota_{r}$} in the sense of Barr and
  Beck, see e.g.~\cite[8.7.1]{MR1269324}.
\end{Rem}

\begin{Rem}
  Putting $\top = {\Uparrow}{\downarrow}$ we obtain the
  \emph{coinduction monad} which we will not discuss further because
  the arguments given in this section are
  straightforward to dualize.
\end{Rem}

\section{Remarks on our Definition of $\ell^{1}$-Homology}
\label{sec:remarks}

Our first and main motivation for our definition of $\ell^{1}$-homology is
purely utilitarian in nature: we want to have a smooth duality between
$\ell^{1}$-homology and bounded cohomology in order to save a lot of
work.

Second, we want to show that no information concerning semi-norms is
lost:
For this we need to describe the classical
$\ell^{1}$-homology as defined e.g. in \cite{toolongthesis}. 
An object of $\scrC_{\ell}{(\Ban)}$ can be
considered as a morphism of the category $\Csn$ of
\emph{complete seminormed spaces} and continuous linear maps. Taking
the cokernel in $\Csn$ gives a realization functor
$\real: \scrC_{\ell}{(\Ban)} \to \Csn$
which is exact in the sense that it transforms exact sequences to
sequences in $\Csn$ whose underlying sequence of vector spaces is
exact, see~\cite[p.xv, Lemma]{mythesis}. It is thus easy to see that
$\ell^{1}$-homology as defined e.g. in \cite{toolongthesis} coincides with
\[
H^{\ell^{1}}_{n}{(G,M)} = 
\real{H^{-n}_{\ell}(\bot_{\ast}M)}
\]
Notice that we use the \emph{left} homology functor $H_{\ell}^{\ast}$
instead of the right one. We have
\[
H_{b}^{n}{(G,M)}
\cong \real H_{\ell}^{n}{(\top^{\ast}M)}
\cong \real \scrH_{b}^{n}{(G,M)}.
\]
The complications involved in the development of 
a reasonable duality between the two classical
theories is discussed at length in \cite[Chapter~3]{toolongthesis}.

Recall that the inclusion functor 
$\iota_{\ell}: \Ban \to \scrC_{\ell}{(\Ban)}$ has a left adjoint
$q_{\ell}$ defined on objects by taking the cokernel in
$\Ban$, see Proposition~\ref{prop:inclusion-functor}. 
Dually, the inclusion functor $\iota_{r}$ has a right adjoint
given by taking the kernel in $\Ban$. Remark~\ref{rem:huebi-2} implies that
there is a natural isomorphism
$q_{\ell} H^{n}_{\ell} \cong q_{r} H^{n}_{r}$ on the derived category 
$\D{(\Ban)}$. From all this we deduce easily:

\begin{Thm}
  The functor $q_{r} \circ \scrH^{\ell^{1}}_{\ast}{(G,{-})}$ coincides with
  Hausdorff\mbox{}ification of classical $\ell^{1}$-homology
  $H_{\ast}^{\ell^{1}}{(G,{-})}$. Similarly, 
  $q_{\ell}\circ \scrH_{b}^{\ast}{(G,{-})}$ coincides with the
  Haus\-dorff\mbox{}\-ification of classical bounded cohomology
  $H^{\ast}_{b}{(G,{-})}$. \qed
\end{Thm}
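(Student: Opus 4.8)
The plan is to compute the two sides of each claim from one and the same complex of Banach spaces and to compare the two competing notions of ``homology'' on it. Fix $M \in \gban{G}$ and set $C^{\bullet}_{M} := (\bot_{\ast}M)_{G}$, the coinvariants of the canonical (bar) resolution, viewed as a complex of Banach spaces in non-positive degrees; dually set $D^{\bullet}_{M} := (\top^{\ast}M)^{G}$. By the theorem of Section~\ref{sec:canonical-resolutions} one has $\scrH^{\ell^{1}}_{n}{(G,M)} \cong H^{-n}_{r}{(C^{\bullet}_{M})}$ in $\scrC_{r}{(\Ban)}$, naturally in $M$, while the discussion opening Section~\ref{sec:remarks} identifies classical $\ell^{1}$-homology as $H^{\ell^{1}}_{n}{(G,M)} = \real\,H^{-n}_{\ell}{(C^{\bullet}_{M})}$; likewise $\scrH_{b}^{n}{(G,M)} \cong H^{n}_{\ell}{(D^{\bullet}_{M})}$ and $H^{n}_{b}{(G,M)} = \real\,H^{n}_{\ell}{(D^{\bullet}_{M})}$. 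So the assertion reduces to two facts that have nothing to do with $G$: (a) a natural isomorphism $q_{\ell} \cong h \circ \real$ of functors $\scrC_{\ell}{(\Ban)} \to \Ban$, where $h \colon \Csn \to \Ban$ is the Hausdorff\mbox{}ification functor, i.e.\ the left adjoint of the inclusion $\Ban \hookrightarrow \Csn$; and (b) the natural isomorphism $q_{r}H^{n}_{r} \cong q_{\ell}H^{n}_{\ell}$ on $\D{(\Ban)}$ provided by Remark~\ref{rem:huebi-2}.

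Granting (a) and (b), the bounded cohomology half falls out at once: applying $q_{\ell}$ to $\scrH_{b}^{n}{(G,M)} \cong H^{n}_{\ell}{(D^{\bullet}_{M})}$ and invoking (a) gives
\[
q_{\ell}\,\scrH_{b}^{n}{(G,M)} \;\cong\; h\,\real\,H^{n}_{\ell}{(D^{\bullet}_{M})} \;=\; h\,H^{n}_{b}{(G,M)},
\]
naturally in $M$, which is the Hausdorff\mbox{}ification of classical bounded cohomology. The $\ell^{1}$-homology half needs one more step, namely (b), precisely because the classical theory is built from the \emph{left} heart whereas $\scrH^{\ell^{1}}$ is built from the \emph{right} one:
\[
q_{r}\,\scrH^{\ell^{1}}_{n}{(G,M)} \;\cong\; q_{r}H^{-n}_{r}{(C^{\bullet}_{M})} \;\cong\; q_{\ell}H^{-n}_{\ell}{(C^{\bullet}_{M})} \;\cong\; h\,\real\,H^{-n}_{\ell}{(C^{\bullet}_{M})} \;=\; h\,H^{\ell^{1}}_{n}{(G,M)},
\]
again naturally in $M$.

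It remains to establish the functorial isomorphism $q_{\ell} \cong h \circ \real$. On an object $Z = (A^{-1} \hookrightarrow A^{0})$ of $\scrC_{\ell}{(\Ban)}$ this is a direct check: by Proposition~\ref{prop:inclusion-functor}, $q_{\ell}(Z) = \Coker_{\Ban}{(A^{-1} \to A^{0})} = A^{0}/\overline{A^{-1}}$; and $\real(Z) = A^{0}/A^{-1}$ carries the quotient seminorm, which is complete by~\cite[p.xv, Lemma]{mythesis} and whose null space is exactly $\overline{A^{-1}}/A^{-1}$, so that $h(\real(Z)) = A^{0}/\overline{A^{-1}}$ as well. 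Rather than chase morphisms for naturality, I would note that $h \circ \real$ is again a left adjoint of the inclusion $\iota_{\ell}\colon\Ban \to \scrC_{\ell}{(\Ban)}$: indeed $\real \circ \iota_{\ell} \cong (\Ban \hookrightarrow \Csn)$, and for each Banach space $B$ the functor $\real$ identifies $\Hom_{\scrC_{\ell}{(\Ban)}}{(Z, \iota_{\ell}B)}$ with $\Hom_{\Csn}{(\real Z, B)}$---both consist of the bounded maps $A^{0} \to B$ that kill $A^{-1}$, the left-hand side by the adjunction $q_{\ell} \dashv \iota_{\ell}$---so composing with $h \dashv (\Ban \hookrightarrow \Csn)$ exhibits $h \circ \real$ as left adjoint to $\iota_{\ell}$, whence $h \circ \real \cong q_{\ell}$ by uniqueness of adjoints.

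I do not expect a genuine obstacle; the real work sits in the earlier sections and what is left is bookkeeping. The two places calling for care are: the left/right discipline in (b)---Remark~\ref{rem:huebi-2} is to be used \emph{only} to pass between the left and the right incarnation of the ordinary homology of a fixed complex of Banach spaces, never between $\scrH^{\ell^{1}}$ and $\scrH_{b}$; and the claim that the quotient seminorm on $\real(Z)$ is complete with Hausdorff\mbox{}ification the Banach cokernel $A^{0}/\overline{A^{-1}}$, which is exactly the phenomenon $\scrC_{\ell}{(\Ban)}$ exists to accommodate ($A^{-1}$ need not be closed in $A^{0}$) and is subsumed by the exactness of $\real$ recorded in~\cite[p.xv, Lemma]{mythesis}.
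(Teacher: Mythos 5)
Your proposal is correct and follows the same route the paper intends: the theorem carries a \qed precisely because it is meant to follow from the identification of the classical theories as $\real\circ H_{\ell}$ of the bar complexes, the isomorphism $q_{\ell}H^{n}_{\ell}\cong q_{r}H^{n}_{r}$ from Remark~\ref{rem:huebi-2}, and the (implicit) fact that Hausdorff\mbox{}ification of the $\Csn$-cokernel is the $\Ban$-cokernel, i.e.\ $h\circ\real\cong q_{\ell}$. You have merely made explicit, via the uniqueness-of-adjoints argument, the step the paper leaves to the reader.
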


\begin{Rem}
  The main interest of the theorem is of course that it shows that as
  far as semi-norms are concerned one may as well work with our
  version of the $\ell^{1}$-homology 
  functors since Hausdorff\mbox{}ification only consists of quotienting
  out the space of vectors of semi-norm zero in
  $H^{\ell^{1}}_{n}{(G,M)}$.
\end{Rem}

\begin{Rem}
  It is important to notice that Hausdorff\mbox{}ification as well as
  $q_{r}$ both fail to be ``exact'', so that the
  Hausdorff\mbox{}ified long exact sequence of $\ell^{1}$-homology and
  bounded cohomology is no longer exact in general.
\end{Rem}

\begin{Rem}
  The dual of the kernel of $f: M \to N$ in $\Ban$
  is \emph{not} the cokernel of the dual map $f^{\ast}:N^{\ast} \to M^{\ast}$
  in general but the quotient of $M^{\ast}$ by the
  weak${}^{\ast}$-closure of the range of $f^{\ast}$. So there is only
  a natural quotient map
  \[
  q_{\ell} \circ \scrH_{b}^{\ast}(G,M^{\ast})
  \epi (q_{r} \circ \scrH_{\ast}^{\ell^{1}}{(G,M)})^{\ast}
  \]
  as is well-known in the classical context, see
  e.g. \cite[p.540]{MR787909}. This map is of course not an
  isomorphism in general.
\end{Rem}

%\tableofcontents

\bibliographystyle{amsalpha}
\bibliography{duality}

\end{document}